\title{}
\author{Heikki Tikanmäki}
\begin{document}
\linespread{1.1}

\newtheorem{maar}{Definition}[section]
\newtheorem{lemma}[maar]{Lemma}
\newtheorem{huom}[maar]{Remark}
\newtheorem{vasta}[maar]{Counter Example}
\newtheorem{theor}[maar]{Theorem}
\newtheorem{corollary}[maar]{Corollary}
\newtheorem{example}[maar]{Example}
\newtheorem{proposition}[maar]{Proposition}
\newtheorem{conjecture}[maar]{Conjecture}

\numberwithin{equation}{section}

\newcommand{\E}{\mathbb{E}}   
\newcommand{\Pro}{\mathbb{P}} 

\newcommand{\N}{\mathbb{N}}   
\newcommand{\Q}{\mathbb{Q}}   
\newcommand{\R}{\mathbb{R}}   
\newcommand{\Z}{\mathbb{Z}}   

\begin{center}

\vspace{7 cm}
\Huge

Robust hedging and pathwise calculus

\end{center}

\thispagestyle{empty}

\normalsize

Heikki Tikanmäki

Aalto University, School of Science

Department of Mathematics and Systems Analysis

P.O. Box 11100, FI-00076 Aalto, Finland 

heikki.tikanmaki@gmail.com

09.12.2011

\begin{abstract}
We study the connections of two different pathwise hedging approaches. These approaches are BSV by \citep{b-s-v} and CF by  \citep{cont,cont2}. We prove that both approaches give the same pathwise hedges, whenever both of the strategies exist. We also prove BSV type robust replication result for CF strategies.

Keywords: robust hedging, path dependent options, non-semimartingale models, vertical derivative.

Subject classification (MSC2010): 91G20, 60H99, 60H07.
\end{abstract}

\section{Introduction}
This paper considers pathwise hedging of options. The pathwise approach for the hedging problem is natural from the practical point of view. In practice, the agent does not know the distribution of the underlying asset for sure even though she has statistical information on the distribution. What she really observes for sure is the path of the stochastic process. Hence, it is natural that the hedging decisions are based on the observed path.

The aim of the paper is to study the relation of two different approaches on pathwise hedging of path dependent options. First of the approaches is the robust hedging approach by Bender-Sottinen-Valkeila (BSV), see~\citet{b-s-v}. In their work they prove that the functional hedging is robust for a broad class of strategies within a certain model class. The other approach is the pathwise calculus of Cont and Fourni\'e (CF), see~\citet{cont,cont2}. This approach can be used for obtaining pathwise hedges using the functional form of the path dependence of certain financial derivatives.

In this paper we prove that when the CF type hedging strategy exists pathwise for a continuous martingale, then one can prove a robust replication result analogous to~\citet{b-s-v}. This means that the pathwise Black-Scholes hedging strategies remain hedges also when a zero quadratic variation process is added to the driving continuous semimartingale. The main message of the theorem is: assume that for some regular enough non-anticipative functional $F$ and for a continuous martingale $X$ it holds that
\begin{equation*}
F_t(X_t)=F_0(X_0)+\int_0^t\nabla_xF_s(X_s)dX(s),
\end{equation*}
where $F_0(X_0)=\E F_t(X_t)$, $X_t$ is the path of $X$ up to time $t$, $\nabla_xF$ is the vertical derivative of $F$ and the integral is an It\^o integral. Then it turns out that for all processes $Z$ with the same quadratic variation, we have the following representation
\begin{equation*}
F_t(Z_t)=F_0(Z_0)+\int_0^t\nabla_xF_s(Z_s)dZ(s)
\end{equation*}
where the integral is understood in a pathwise sense.

We also prove the following result on uniqueness of non-probabilistic martingale representation theorem: whenever both CF type and BSV type hedging strategies exist in pathwise sense, then both of the strategies must be the same for all paths, also in the non-semimartingale case. The theorem states roughly that if $X$ is a continuous square integrable martingale and
\begin{equation*}
\int_0^T\nabla_xF_s(X_s)dX(s)=\int_0^T\phi(s,X(s),g_1(s,X_s),\dots,g_n(s,X_s))dX(s)
\end{equation*}
as It\^o integrals, where $\nabla_xF$, the vertical derivative, is the CF type hedging strategy and $\phi$ is the BSV type hedging strategy, then for all paths $x$ with the same quadratic variation as $X$ and for all $t\in[0,T]$
\begin{equation*}
\int_0^t\nabla_xF_s(x_s)dx(s)=\int_0^t\phi(s,x(s),g_1(s,x_s),\dots,g_n(s,x_s))dx(s).
\end{equation*}
This result motivates us to define the pathwise vertical derivative of a process with respect to another process for broader class of processes than is done in~\citet{cont2}.

In addition, we provide some examples on the robust replication strategies. It turns out that in some cases it is more convenient to work with BSV strategies; i.e. the cases where the hedging strategy is known explicitly for the reference model (that is the semimartingale model with the same quadratic variation). On the other hand, CF type approach is useful, when the hedging strategy is not known a priori even in the case of the reference model.

We show how the models considered here can capture several stylized facts such as different dependence structures or heavy tails. We give a table to point out, which processes one can use for modeling, when the stylized facts are given.

The paper is organized as follows. In section~\ref{preli} we introduce some notation used throughout the paper and summarize the key results of the main references~\citep{b-s-v,cont,cont2}. In section~\ref{results} we state and prove our main results. In section~\ref{examples1} we present some examples on finding hedges for path-dependent options.  In section~\ref{examples2} we give examples on what kind of models are covered by this approach. 
\section{Preliminaries}
\label{preli}
\subsection{Notation}
Let us fix $T > 0$ and  $U \subset \R$ be open. For $A\subset \R$ we denote by $D([a,b],A)$ the space of cadlag functions taking their values in $A$. By $D([a,b])$ we mean $D([a,b],\R)$. The spaces of continuous functions are denoted by $C([a,b],A)$ and $C([a,b])$ respectively. By $C^i$ we denote $i$ times continuously differentiable functions. Class $C^{i,j}$ contains the functions that are $i$ times differentiable w.r.t. the first variable and $j$ times differentiable w.r.t. the second variable. Let $C_{s_0}([a,b])=\{x\in C([a,b],U) | x(0)=s_0\}$ and $C_{s_0}=C_{s_0}([0,T])$.

For a path $x\in D([0,T])$ we write $x(t)$ for the point values and $x_t=(x(u))_{u\in[0,t]}$ for the restriction of the path to the interval $[0,t]\subset [0,T]$. The same convention applies to the stochastic processes where $X(t)$ denotes the value of the process at time $t$ and $X_t=(X(u))_{u\in[0,t]}$.

Let us denote by $x_{t-}$ the path on $[0,t]$ defined by
\begin{equation*}
x_{t-}(u)=x(u), \quad u\in[0,t) \quad \text{and}\quad x_{t-}(t)=x(t-),
\end{equation*}
where $x(t-)=\lim_{s\uparrow t}x(s)$. Note that in general $x_{t-}$ is not the same path as $(x_-)_t=(x(u-))_{u\in(0,t]}$.

We denote by $\pi=\{0=t_0<t_1<\dots<t_{k}=T\}$ a partition of the interval $[0,T]$. Let $(\pi_n)_{n=0}^\infty$ be a sequence of partitions of $[0,T]$ s.t. the size of the partition
\begin{equation*}
|\pi_n|=\max_{t_{j}^n\in \pi_n\backslash\{0\}}(t_{j}^n-t_{j-1}^n)\rightarrow 0.
\end{equation*}
The quadruple $(\Omega,\mathcal{F},(\mathcal{F}_t)_{t\in[0,T]},\Pro)$ is assumed to be a filtered probability space satisfying the usual assumptions of completeness and right continuity of the filtration $(\mathcal{F}_t)_{t\in[0,T]}$.
\begin{maar}[Quadratic variation process]
A process $(X(t))_{t\in[0,T]}$ is a quadratic variation process along the sequence $(\pi_n)_{n=0}^\infty$ if $\forall t\in[0,T]$ the limit
\begin{equation*}
[X](t)=\lim_{n \rightarrow \infty}\sum_{t_{j}^n\in\pi_n\cap(0,t]}(X({t_{j}^n})-X({t_{j-1}^n}))^2
\end{equation*}
exists a.s. and is continuous in $t$.
\end{maar}
The integrals considered in the paper are pathwise forward-type integrals if not mentioned otherwise. When the integrand is a vertical derivative, we will use the so-called F\"ollmer integral, which will be defined later. This integral is almost same as the forward integral.
\begin{maar}[Forward integral]
Let $t<T$ and $(X(s))_{s\in[0,T]}$ be a continuous process. The forward integral of a process $(Y(s))_{s\in[0,T]}$ with respect to $X$ along the sequence of partitions $(\pi_n)_{n=0}^\infty$ is
\begin{equation*}
\int_0^tY(s)dX(s)=\lim_{n\rightarrow \infty}\sum_{t_{j}^n\in\pi_n\cap(0,t]}Y(t_{j-1}^n)(X(t_{j}^n)-X(t_{j-1}^n)),
\end{equation*}
where the limit is assumed to exist a.s. The integral over the whole interval is defined as
\begin{equation*}
\int_0^TY(s)dX(s)=\lim_{t\uparrow T}\int_0^tY(s)dX(s),
\end{equation*}
where the limit is understood in a.s. sense.
\end{maar}
The sequence of partitions is assumed to be fixed and thus it is not included in the notation. Note that analogously one can define forward integrals for deterministic paths.
\subsection{Robust replication and no arbitrage}
This subsection is based on the results of~\citet{b-s-v}. In order to make the comparison to the results of~\citet{cont} more transparent, we have slightly changed the notation.

The following concept of full support means roughly that any path for a stochastic process is possible.
\begin{maar}[Full support]
Process $S$ has full support in $C_{s_0}$ if
\begin{equation*}
\text{supp}\left(\text{Law}\left(S_T\right)\right)=C_{s_0}.
\end{equation*}
\end{maar}
The following is a conditional version of the previous concept.
\begin{maar}[Conditional full support (CFS)]
We say that process $S$ has conditional full support in $C_{s_0}$ w.r.t. filtration $(\mathcal{F}_t)_{t\in[0,T]}$ if
\begin{enumerate}
\item $S$ is adapted to $(\mathcal{F}_t)_{t\in[0,T]}$ and
\item for all $t\in [0,T)$ and almost all $\omega \in \Omega$
\begin{equation*}
\text{supp}\left(\text{Law}\left((S(s))_{s\in[t,T]}| \mathcal{F}_t\right)\right)=C_{S(t)}([t,T]).
\end{equation*}
\end{enumerate}
\end{maar}
The concept of conditional full support was first introduced and studied in~\citet{grs}. By~\citet{pakkanen}, conditional full support property is equivalent to the conditional small ball property of~\citet{b-s-v}.

Next we will define the concept of discounted market model.
\begin{maar}[Discounted market model]
A five-tuple $(\Omega, \mathcal{F},S,(\mathcal{F}_t)_{t\in[0,T]},\Pro)$ is called a discounted market model if $(\Omega,\mathcal{F},(\mathcal{F}_t)_{t\in[0,T]},\Pro)$ is a filtered probability space satisfying the usual conditions and $S=(S_t)_{t\in[0,T]}$ is an $(\mathcal{F}_t)_{t\in[0,T]}$ progressively measurable quadratic variation process with continuous paths starting from $s_0$.
\end{maar}
Let $\sigma: \R \mapsto \R$ be a continuously differentiable function of at most linear growth s.t. $\sigma\neq 0$ Lebesgue a.e. Now we can define a path space for the price processes.
\begin{maar}[Space $C_{\sigma,s_0}$]
Let $f_\sigma$ be the unique solution to the ordinary differential equation
\begin{equation*}
f'(x)=\sigma(f(x)), \quad f(0)=s_0.
\end{equation*}
Define the space
\begin{equation*}
C_{\sigma,s_0}=\{f_\sigma(\theta(\cdot)) : \theta \in C([0,T]),\theta(0)=0\}.
\end{equation*}
\end{maar}
If we choose $\sigma(x)=\alpha x$, then $f_\sigma(x)=s_0e^{\alpha x}$ and $C_{\sigma,s_0}$ is the set of positive valued functions starting at $s_0$.

The model class that we will work with is defined as follows.
\begin{maar}[Model class $\mathcal{M}_\sigma$]
The model class $\mathcal{M}_\sigma$ corresponding to $\sigma$ is defined to be the class of discounted market models satisfying the quadratic variation property
\begin{equation*}
d[S](t)=\sigma^2(S(t))dt \quad a.s.
\end{equation*}
and the following non-degeneracy property: $\Pro(S\in C_{\sigma,s_0})=1$ and $S$ has full support in $C_{s_0}$.
\end{maar}
Now we will construct a reference model. The reference model corresponds to the risk-neutral Black-Scholes model for general $\mathcal{M}_\sigma$. Let $(\bar{\Omega}, \bar{\mathcal{F}}, \bar{\Pro})$ be the canonical Wiener space on $[0,T]$, $W$ be a Brownian motion and $({\mathcal{F}}^W_t)_{t\in[0,T]}$ the filtration generated by $W$. We will make the following assumption: the process
\begin{equation}
\label{assumptionH}
M(t)=\exp \left(-\frac{1}{2}\int_0^t \sigma'(f_\sigma(W(r)))dW(r)-\frac{1}{8}\int_0^t \left(\sigma'(f_\sigma(W(r)))\right)^2dr\right)
\end{equation}
is well defined and a martingale under $\bar{\Pro}$. Now one can define a new probability measure on $(\bar{\Omega},\mathcal{F}^W_T)$ by
\begin{equation*}
\mathbb{Q}(A)=\int_A M(T)d\bar{\Pro}, \quad A\in \mathcal{F}^W_T.
\end{equation*}
Now
\begin{equation*}
\bar{W}(t)=W(t)+\frac{1}{2}\int_0^t\sigma'(f_\sigma(W(r)))dr
\end{equation*}
is a Brownian motion under $\mathbb{Q}$. The discounted price process is defined as
\begin{equation*}
\bar{S}(t)=f_\sigma(W(t)).
\end{equation*}
The reason for such a construction is that now $\bar{S}$ is a continuous martingale with respect to measure $\mathbb{Q}$. We call $(\bar{\Omega},\bar{\mathcal{F}},\bar{S},({\mathcal{F}}^W_t)_{t\in[0,T]},\mathbb{Q})$ the reference model. For more details, see~\citet{b-s-v}.

In the rest of the paper, we assume that $\sigma$ is such that the condition of equation~(\ref{assumptionH}) is satisfied. The standard Black-Scholes model as well as several stochastic volatility models satisfy the condition \citep{b-s-v}.

The following definition is equivalent to the definition of hindsight factor in~\citet{b-s-v}. However, the notation is slightly different. 
\begin{maar}[Hindsight factor]
Let $g: [0,T]\times C_{\sigma,s_0}\mapsto \R$. We say that function $g$ is a hindsight factor, if the following conditions hold.
\begin{enumerate}
\item $g(t,\eta)=g(t,\eta_t)$,  $\forall t\in[0,T]$, 
\item $g(\cdot,\eta)$ is of bounded variation and continuous $\forall \eta \in C_{\sigma,s_0}$.
\item There is a constant $K$ s.t. $\forall f$ continuous
\begin{equation*}
\left| \int_0^t f(s)dg(s,\eta)-\int_0^t f(s)dg(s,\tilde{\eta})\right|\leq K \max_{0\leq r\leq t}|f(r)|\left \| \eta-\tilde{\eta}\right \|_\infty.
\end{equation*}
\end{enumerate}
\end{maar}
For example running minimum, maximum and average are hindsight factors. Let us consider hindsight factors $g_1,\dots, g_n$ and a function $\phi:[0,T]\times \R \times \R^n \mapsto \R$. Let us study strategies of the form
\begin{equation}
\label{strategyform}
\Phi(t)=\phi(t,S(t),g_1(t,S_t),\dots,g_n(t,S_t)).
\end{equation}
The wealth process corresponding to the strategy $\Phi$ is
\begin{equation*}
V_t(\Phi,v_0,S)=v_0+\int_0^t\Phi(u)dS(u),
\end{equation*}
where $v_0$ is the initial capital.

Now we define smooth strategies that will be used as a starting point for the definition of allowed strategies.
\begin{maar}[Smooth strategy]
A strategy $\Phi$ of the form~(\ref{strategyform}) is called smooth if $\phi\in C^1([0,T]\times \R \times \R^n)$ and there is a constant $a>0$ s.t. $\forall t \in[0,T]$
\begin{equation*}
\int_0^t \Phi(u)dS(u)\geq -a \quad a.s.
\end{equation*}
\end{maar}
The latter condition is the classical no-doubling-strategies (NDS) condition. It is possible to relax the assumptions of smooth strategies. For example one can define piece-wise smooth strategies.
\begin{maar}[Piece-wise smooth strategy]
Let $0=s_0<s_1<\dots <s_J=T$ and
\begin{equation*}
\phi_j:[s_{j-1},s_j]\times \R \times \R^n \times \R \mapsto \R, \quad j=1,\dots,J
\end{equation*}
continuously differentiable in the first $n+2$ variables and continuous in the last one. Then
\begin{equation*}
\Phi(t)=\sum_{j=1}^J 1_{(s_{j-1},s_j]}(t)\phi_j\left(t,S(t),g_1(t,S_t),\dots,g_n(t,S_t),\xi_j(S_{s_{j-1}})\right)
\end{equation*} 
is called a piece-wise smooth trading strategy, where $\xi_j:C_{\sigma,s_0}\mapsto \R$ is continuous, $j=1,\dots,J$.
\end{maar}
The following definition of arbitrage is standard.
\begin{maar}[Arbitrage]
A strategy $\Phi$ is an arbitrage strategy in the market model $(\Omega, \mathcal{F}, S, (\mathcal{F}_t)_{t\in[0,T]},\Pro)$ if
\begin{equation*}
V_T(\Phi,0,S)\geq 0 \quad a.s. \quad \text{and}\quad \Pro(V_T(\Phi,0,S)>0)>0.
\end{equation*}
\end{maar}
The following no-arbitrage theorem holds for smooth strategies.
\begin{theor}[No-arbitrage with smooth strategies]
Let $(\Omega, \mathcal{F},S,(\mathcal{F}_t)_{t\in[0,T]},\Pro)\in \mathcal{M}_{\sigma}$ and let $\Phi$ be (piece-wise) smooth trading strategy. Then $\Phi$ is not an arbitrage strategy.
\end{theor}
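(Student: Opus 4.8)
The plan is to exhibit a single continuous \emph{pathwise} functional of the price trajectory that computes the terminal wealth simultaneously in every model of $\mathcal{M}_\sigma$---in particular in the reference model $(\bar\Omega,\bar{\mathcal{F}},\bar S,(\mathcal{F}^W_t)_{t\in[0,T]},\mathbb{Q})$---and then to play the supermartingale property of the wealth in the reference model off against the full-support hypothesis. Throughout I would follow the construction of~\citet{b-s-v} and cite it for the technical estimates.

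\emph{Step 1: model-free pathwise representation of the wealth.} Take $\Phi$ smooth, $\Phi(t)=\phi\bigl(t,S(t),g_1(t,S_t),\dots,g_n(t,S_t)\bigr)$ with $\phi\in C^1$, and set $\Psi(t,x,y_1,\dots,y_n)=\int_{s_0}^{x}\phi(t,z,y_1,\dots,y_n)\,dz$; then $\Psi$ is $C^1$ in $(t,y_1,\dots,y_n)$, $C^2$ in $x$, and $\partial_x\Psi=\phi$. Each process $g_i(\cdot,S_\cdot)$ is continuous of bounded variation (hindsight-factor property~2) and $S$ is a quadratic variation process, so F\"ollmer's pathwise It\^o formula---in the form that admits finitely many arguments of bounded variation---applies to $t\mapsto\Psi\bigl(t,S(t),g_1(t,S_t),\dots,g_n(t,S_t)\bigr)$ and, after rearranging and using $d[S](u)=\sigma^2(S(u))\,du$, yields
\begin{align*}
\int_0^t\Phi(u)\,dS(u) &=\Psi\bigl(t,S(t),g_1(t,S_t),\dots\bigr)-\Psi\bigl(0,s_0,g_1(0,s_0),\dots\bigr)-\int_0^t\partial_t\Psi\,du \\
&\quad-\sum_{i=1}^n\int_0^t\partial_{y_i}\Psi\,dg_i(u,S_u)-\tfrac12\int_0^t\partial_x\phi\,\sigma^2(S(u))\,du,
\end{align*}
all partials being evaluated along $\bigl(u,S(u),g_1(u,S_u),\dots,g_n(u,S_u)\bigr)$, the forward integral on the left existing a.s.\ as a by-product. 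The right-hand side is an explicit functional $\mathcal{V}_t(x_t)$ built only from $\sigma,\phi,g_1,\dots,g_n$ and the path, and hindsight-factor property~3 together with the continuity of $\phi,\partial_t\phi,\partial_x\phi$ shows that $x\mapsto\mathcal{V}_t(x_t)$ is $\|\cdot\|_\infty$-continuous on $C_{\sigma,s_0}$. Since no probability measure enters the formula, $V_t(\Phi,0,S)=\mathcal{V}_t(S_t)$ a.s.\ for every model of $\mathcal{M}_\sigma$; and for the reference model, where $\bar S=f_\sigma(W)$ is a continuous $\mathbb{Q}$-semimartingale with $d[\bar S]=\sigma^2(\bar S)\,dt$ and the forward integral of the continuous adapted integrand coincides a.s.\ with the It\^o integral, the same computation (now the ordinary It\^o formula) gives $V_t(\Phi,0,\bar S)=\mathcal{V}_t(\bar S_t)$ $\mathbb{Q}$-a.s. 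The piece-wise smooth case is treated interval by interval, the arguments $\xi_j(S_{s_{j-1}})$ entering continuously by continuity of the $\xi_j$.

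\emph{Steps 2--3: the reference model and the contradiction.} Fix $t$. Because $\mathcal{V}_t$ is continuous on $C_{\sigma,s_0}$, $\text{Law}(S_t)$ is carried by $C_{\sigma,s_0}\subseteq C_{s_0}$, and $S$ has full support in $C_{s_0}$, every point of $C_{\sigma,s_0}$ lies in the support of $\text{Law}(S_t)$, so any a.s.\ inequality $\mathcal{V}_t(S_t)\ge c$ forces $\mathcal{V}_t\ge c$ on all of $C_{\sigma,s_0}$, hence $\mathcal{V}_t(\bar S_t)\ge c$ $\mathbb{Q}$-a.s. Applying this with the NDS bound $c=-a$ shows that $V_\cdot(\Phi,0,\bar S)=\int_0^\cdot\Phi\,d\bar S$ is a continuous $\mathbb{Q}$-local martingale (as $\bar S$ is a $\mathbb{Q}$-martingale) bounded below by $-a$, hence a supermartingale, so $\E_{\mathbb{Q}}[V_T(\Phi,0,\bar S)]\le 0$. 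Now suppose $\Phi$ were an arbitrage for some model of $\mathcal{M}_\sigma$, i.e.\ $\mathcal{V}_T(S_T)=V_T(\Phi,0,S)\ge 0$ a.s.\ and $\Pro(\mathcal{V}_T(S_T)>0)>0$. Propagation with $c=0$ gives $V_T(\Phi,0,\bar S)=\mathcal{V}_T(\bar S_T)\ge 0$ $\mathbb{Q}$-a.s., which together with $\E_{\mathbb{Q}}[V_T(\Phi,0,\bar S)]\le 0$ forces $\mathcal{V}_T(\bar S_T)=0$ $\mathbb{Q}$-a.s. Since $\mathbb{Q}\sim\bar{\Pro}$ on $\mathcal{F}^W_T$, the $\mathbb{Q}$-law of $\bar S=f_\sigma(W)$ has full support $C_{\sigma,s_0}$, so continuity of $\mathcal{V}_T$ yields $\mathcal{V}_T\equiv 0$ on $C_{\sigma,s_0}$, whence $V_T(\Phi,0,S)=\mathcal{V}_T(S_T)=0$ $\Pro$-a.s., contradicting $\Pro(V_T(\Phi,0,S)>0)>0$. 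Therefore $\Phi$ is not an arbitrage.

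\emph{Expected main obstacle.} The heart of the matter is Step~1: one must show that the forward integral defining the wealth exists pathwise and equals a functional that is both $\|\cdot\|_\infty$-continuous on $C_{\sigma,s_0}$ and independent of the model. This requires a version of F\"ollmer's pathwise It\^o formula robust enough to carry the bounded-variation hindsight arguments $g_i(t,S_t)$, together with the stability estimate of hindsight-factor property~3 (and routine bounds on the Riemann--Stieltjes integrals against the $g_i$) to secure the continuity. Steps~2 and~3 are then straightforward.
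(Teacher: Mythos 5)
The paper states this theorem without proof, importing it from \citet{b-s-v}, so there is no in-paper argument to compare against; your proposal is correct and reconstructs essentially the BSV proof: the F\"ollmer--It\^o formula identifies the wealth with the model-independent, $\|\cdot\|_\infty$-continuous wealth functional $v(t,\eta,\phi)$, full support transfers the a.s.\ inequalities $V_T\geq 0$ and $V_\cdot\geq -a$ to the reference model, and there the NDS-bounded local martingale is a supermartingale, yielding the contradiction. The one step worth writing out carefully is the a.s.\ identification of the pathwise forward integral with the It\^o integral under $\mathbb{Q}$ (the Riemann sums converge in probability to the It\^o integral, so the a.s.\ limit supplied by F\"ollmer's formula must agree with it), but this is routine and your argument goes through.
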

There are also other no-arbitrage results in related setups, see~\citet{b-s-v},~\citet{bender} and~\cite{coviello}. The concept of wealth functionals is needed when defining allowed strategies.
\begin{maar}[Wealth functional]
A wealth functional $v$ for  $t \in [0,T]$ is defined as
\begin{equation*}
v:[0,t]\times C_{\sigma,s_0}\times C^1([0,t]\times \R\times \R^n)\mapsto \R
\end{equation*}
as the It\^o formula would suggest
\begin{align*}
v(t,\eta,\phi)=&u(t,\eta(t),g_1(t,\eta_t),\dots,g_n(t,\eta_t))\\
&-\sum_{j=1}^n \int_0^t \frac{\partial}{\partial y_j}u(r,\eta(r),g_1(r,\eta_r),\dots,g_n(r,\eta_r))dg_j(r,\eta_r)\\
&-\int_0^t \frac{\partial}{\partial t}u(r,\eta(r),g_1(r,\eta_r),\dots,g_n(r,\eta_r))dr\\
&-\frac{1}{2}\int_0^t\frac{\partial}{\partial x}\phi(r,\eta(r),g_1(r,\eta_r),\dots,g_n(r,\eta_r))\sigma^2(\eta(r))dr,
\end{align*}
where
\begin{equation*}
u(t,x,y_1,\dots, y_n)=\int_{s_0}^x \phi(t,\xi,y_1,\dots,y_n)d\xi.
\end{equation*}
\end{maar}
One of the most important classes of strategies in this paper is the class of allowed strategies.
\begin{maar}[Allowed strategies]
\label{allowed}
A strategy $\Phi$ is allowed for the model class $\mathcal{M}_\sigma$ if the following conditions hold.
\begin{enumerate}
\item There are hindsight factors $g_1,\dots, g_n$ and a function $\phi\in C^1([0,T)\times \R\times \R^n)$ s.t.
\begin{equation*}
\Phi(t)=\phi(t,S(t),g_1(t,S_t),\dots,g_n(t,S_t)).
\end{equation*}
\item There exists a dense subset $B\subset C_{\sigma,s_0}$ and a functional $F:B\mapsto \R$ s.t. $\forall (\Omega, \mathcal{F},S,(\mathcal{F}_t)_{t\in[0,T]},\Pro)\in \mathcal{M}_\sigma$ it holds that $\Pro(S\in B)=1$ and $\forall \eta \in B$ it holds that
\begin{equation*}
\lim_{t\uparrow T}v(t,\eta,\phi)=F(\eta),
\end{equation*}
and $F$ is continuous in $B$.
\item $\exists a>0$ s.t. $\forall t\in[0,T]$
\begin{equation*}
\int_0^t \Phi(u)dS(u)\geq -a \quad a.s.
\end{equation*}
\end{enumerate} 
\end{maar}
A robust no-arbitrage result holds for allowed strategies.
\begin{theor}[Robust no-arbitrage]
Every model in the class $\mathcal{M}_\sigma$ is free of arbitrage with allowed strategies.
\end{theor}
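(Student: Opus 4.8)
The plan is to reduce the claim to the absence of arbitrage in the reference model, by showing that the terminal wealth of an allowed strategy is a single, model-independent, pathwise functional $F$, and then transporting the two defining inequalities of an arbitrage from an arbitrary model of $\mathcal{M}_\sigma$ to the reference model via the common full-support property and the continuity of $F$.

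First I would record that, for \emph{every} model $(\Omega,\mathcal{F},S,(\mathcal{F}_t)_{t\in[0,T]},\Pro)\in\mathcal{M}_\sigma$ and every allowed strategy $\Phi(t)=\phi(t,S(t),g_1(t,S_t),\dots,g_n(t,S_t))$, the wealth process satisfies $V_t(\Phi,v_0,S)=v_0+v(t,S,\phi)$ for all $t<T$ along the fixed partition sequence. Indeed, $v$ is built precisely as the pathwise It\^o (F\"ollmer) formula applied to $u(t,x,y)=\int_{s_0}^x\phi(t,\xi,y)\,d\xi$, whose $x$-derivative is $\phi$; since $S$ has continuous paths in $C_{\sigma,s_0}$ with $d[S](t)=\sigma^2(S(t))\,dt$ and $\phi\in C^1$, the forward integral $\int_0^t\Phi(u)\,dS(u)$ exists and equals $v(t,S,\phi)$ (with $v(0,S,\phi)=0$), and in the semimartingale reference model this is merely the classical It\^o formula rearranged. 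Condition~(2) of Definition~\ref{allowed} together with $\Pro(S\in B)=1$ then yields $V_T(\Phi,v_0,S)=v_0+F(S)$ a.s.\ in every model of $\mathcal{M}_\sigma$, with one and the same functional $F$, continuous on $B$.

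Now assume, towards a contradiction, that some allowed $\Phi$ is an arbitrage in a model $(\Omega,\mathcal{F},S,(\mathcal{F}_t)_{t\in[0,T]},\Pro)\in\mathcal{M}_\sigma$, so that $F(S)\ge 0$ a.s.\ and $\Pro(F(S)>0)>0$. Because $S$ has full support, the full-probability event $\{S\in B,\,F(S)\ge 0\}$ forces the set $\{\eta\in B:F(\eta)\ge 0\}$ to meet every nonempty relatively open subset of $B$; by continuity of $F$ on $B$ this gives $F\ge 0$ on all of $B$. Similarly, $\Pro(F(S)>0)>0$ produces an $\eta^\ast\in B$ with $F(\eta^\ast)>0$, and continuity furnishes a nonempty open set $O$ with $O\cap B\subset\{\eta\in B:F(\eta)>0\}$. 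Turning to the reference model $(\bar\Omega,\bar{\mathcal{F}},\bar S,(\mathcal{F}^W_t)_{t\in[0,T]},\mathbb{Q})$, which belongs to $\mathcal{M}_\sigma$ (see~\citet{b-s-v}): one has $\mathbb{Q}(\bar S\in B)=1$ and $\bar S$ has full support, hence $\mathbb{Q}(\bar S\in O)>0$, so $\mathbb{Q}(F(\bar S)>0)>0$, while $F(\bar S)\ge 0$ $\mathbb{Q}$-a.s.; therefore $\E_{\mathbb{Q}}\big[F(\bar S)\big]>0$.

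On the other hand, in the reference model $V_t(\Phi,0,\bar S)=\int_0^t\Phi(u)\,d\bar S(u)$ is an It\^o integral against the $\mathbb{Q}$-martingale $\bar S$, hence a local martingale, and condition~(3) of Definition~\ref{allowed} keeps it $\ge -a$; thus $V_t(\Phi,0,\bar S)+a\ge 0$ is a nonnegative $\mathbb{Q}$-supermartingale. Letting $t\uparrow T$ and combining Fatou's lemma with $\E_{\mathbb{Q}}[V_t(\Phi,0,\bar S)]\le 0$ gives $\E_{\mathbb{Q}}[F(\bar S)]=\E_{\mathbb{Q}}[V_T(\Phi,0,\bar S)]\le 0$, contradicting the previous paragraph; hence no allowed strategy is an arbitrage in any model of $\mathcal{M}_\sigma$. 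I expect the delicate point to be exactly this transfer of a.s.\ inequalities between two a priori unrelated models: it works only because $F$ is genuinely continuous on the common ``carrier'' $B$ and because full support is shared, so one must be careful that $F$ behaves well on all of $B$ and that the open sets in the support arguments are taken in the correct topology ($C_{s_0}$ versus $C_{\sigma,s_0}$). The identification $\int_0^t\Phi\,dS=v(t,S,\phi)$, the agreement of forward and It\^o integrals in the reference model, and the membership of the reference model in $\mathcal{M}_\sigma$ are all imported from~\citet{b-s-v}.
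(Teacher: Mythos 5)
Your argument is correct and is essentially the proof of Bender--Sottinen--Valkeila, from whom the paper imports this theorem without giving a proof of its own: identify the wealth pathwise with the wealth functional via the F\"ollmer--It\^o formula so that $V_T(\Phi,v_0,S)=v_0+F(S)$ with one model-independent $F$ continuous on $B$, transfer the two arbitrage inequalities to the reference model using full support together with $\Pro(S\in B)=1$, and contradict the supermartingale property of the bounded-below It\^o integral against the $\mathbb{Q}$-martingale $\bar S$. No gaps to report.
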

In addition to the robust no-arbitrage result, there is also a robust replication result for allowed strategies.
\begin{theor}[Robust replication]
Let $G$ be a continuous functional on $C_{\sigma,s_0}$ s.t. $G(\bar{S})$ can be replicated $\bar{\Pro}$-a.s. in the reference model $(\bar{\Omega},\bar{\mathcal{F}},\bar{S},(\bar{\mathcal{F}}_t)_{t\in[0,T]},\bar{\Pro})\in \mathcal{M}_\sigma$ with initial capital $v_0$ and allowed strategy
\begin{equation*}
\bar{\Phi}^*(t)=\phi^*(t,\bar{S}(t),g_1(t,\bar{S}_t),\dots,g_n(t,\bar{S}_t)).
\end{equation*}
Then $G(S)$ is replicable $\Pro$-a.s. in every model $(\Omega,\mathcal{F},S,(\mathcal{F}_t)_{t\in[0,T]},\Pro)\in \mathcal{M}_\sigma$ where initial capital is $v_0$ and the replicating allowed strategy is given by
\begin{equation*}
\Phi^*(t)=\phi^*(t,S(t),g_1(t,S_t),\dots,g_n(t,S_t)),
\end{equation*}
which means that the replicating allowed strategies are functionals of the path of the stock price, independently of the model.

The converse is also true: any functional hedge $\phi^*$ in some model $(\Omega, \mathcal{F},S,(\mathcal{F}_t)_{t\in[0,T]},\Pro)\in \mathcal{M}_\sigma$ is also a functional hedge for the reference model.
\end{theor}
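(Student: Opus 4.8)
The plan is to reduce both implications to a single pathwise identity: for an allowed strategy $\Phi(t)=\phi(t,S(t),g_1(t,S_t),\dots,g_n(t,S_t))$ the accumulated wealth is a pathwise functional of the price path, namely
\[
\int_0^t\Phi(u)\,dS(u)=v(t,S,\phi),
\]
valid for every $S\in C_{\sigma,s_0}$ along the fixed partition sequence $(\pi_n)$. I would obtain this by applying the F\"ollmer--It\^o change-of-variables formula to the antiderivative $u(t,x,y_1,\dots,y_n)=\int_{s_0}^x\phi(t,\xi,y_1,\dots,y_n)\,d\xi$: since $\partial_x u=\phi$ and $\partial_{xx}u=\partial_x\phi$, since the paths $t\mapsto g_j(t,S_t)$ have bounded variation and therefore contribute only first-order terms (the mixed brackets $[S,g_j(\cdot,S_\cdot)]$ vanish), and since $d[S](u)=\sigma^2(S(u))\,du$ on $C_{\sigma,s_0}$, the formula produces exactly the four terms defining $v(t,S,\phi)$, with the boundary term vanishing because $u(0,s_0,\dots)=0$. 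Letting $t\uparrow T$, item~2 of Definition~\ref{allowed} identifies the limit with $F(S)$ for $S$ in the dense set $B$, while item~3 gives the lower bound making $V(\Phi,v_0,S)$ admissible.

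Next I would identify the terminal functional $F$ (the one attached to the allowed strategy $\phi^*$) with $G-v_0$. In the reference model $\bar{S}$ is a continuous $\mathbb{Q}$-martingale with $\mathbb{Q}\sim\bar{\Pro}$, and for continuous semimartingales the forward integral along $(\pi_n)$ of a continuous adapted integrand coincides a.s.\ with the It\^o integral; combining this with the assumed replication $G(\bar{S})=v_0+\int_0^T\phi^*(\dots)\,d\bar{S}$ and with the pathwise identity above yields $G(\bar{S})=v_0+F(\bar{S})$, $\bar{\Pro}$-a.s. Since the reference model lies in $\mathcal{M}_\sigma$, the process $\bar{S}$ has full support, so the set $\{\eta\in B:\ G(\eta)=v_0+F(\eta)\}$ is dense in $C_{\sigma,s_0}$; as $G$ is continuous on $C_{\sigma,s_0}$ and $F$ is continuous on $B$, it follows that $F(\eta)=G(\eta)-v_0$ for every $\eta\in B$.

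Finally I would run the pathwise identity in the other direction. For an arbitrary $(\Omega,\mathcal{F},S,(\mathcal{F}_t)_{t\in[0,T]},\Pro)\in\mathcal{M}_\sigma$ the allowed-strategy property gives $\Pro(S\in B)=1$, and on $\{S\in B\}$,
\[
V_T(\Phi^*,v_0,S)=v_0+\int_0^T\Phi^*(u)\,dS(u)=v_0+v(T,S,\phi^*)=v_0+F(S)=G(S),
\]
which is the robust replication claim. For the converse, if $\phi^*$ functionally hedges $G$ in some model of $\mathcal{M}_\sigma$, the same identity forces $G(S)=v_0+F(S)$ on the support of $\text{Law}(S)$, hence $F=G-v_0$ on $B$ by density and continuity; substituting this back into the pathwise identity for $\bar{S}$ (again using the a.s.\ coincidence of the forward and It\^o integrals) shows $v_0+\int_0^T\phi^*(\dots)\,d\bar{S}=G(\bar{S})$, $\bar{\Pro}$-a.s.

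The step I expect to be the main obstacle is the rigorous justification of the pathwise identity $\int_0^t\Phi(u)\,dS(u)=v(t,S,\phi)$: one must establish the F\"ollmer--It\^o formula for a functional depending simultaneously on the current price and on several bounded-variation hindsight factors, control the mixed terms, and handle the limit $t\uparrow T$ in a way that transfers uniformly across all models of $\mathcal{M}_\sigma$; and in the reference model one must make precise the a.s.\ coincidence of the forward integral along $(\pi_n)$ with the It\^o integral, which is exactly where the non-degeneracy and full-support hypotheses built into $\mathcal{M}_\sigma$ enter.
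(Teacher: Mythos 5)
The paper does not prove this theorem---it is quoted from \citet{b-s-v} as background material---so there is no in-paper proof to compare against; your proposal correctly reconstructs the standard BSV argument: the F\"ollmer--It\^o identity $\int_0^t\Phi(u)\,dS(u)=v(t,S,\phi)$ turning the wealth into a pathwise functional of the price path, the identification $F=G-v_0$ on the dense set $B$ via full support and the continuity of $G$ and $F$, and the transfer to an arbitrary model through $\Pro(S\in B)=1$. The one slight misattribution is in your closing paragraph: the full-support/non-degeneracy hypotheses of $\mathcal{M}_\sigma$ are what drive the density step $F=G-v_0$, not the a.s.\ agreement of the forward and It\^o integrals, which instead follows from the pathwise identity together with convergence in probability of the Riemann sums along a subsequence.
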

The strategy $\phi^*$ is called the BSV type hedging strategy of functional $G$.
\subsection{Functional change of variables formula}
This subsection is mainly based on the articles~\citet{cont,cont2} and the PhD thesis~\citet{fournie}. They define the non-anticipative functionals in higher dimensions but in the present paper we work in $\R$.

Let $(\Omega, \mathcal{F},(\mathcal{F}_t)_{t\in[0,T]},\Pro)$ be a filtered probability space.
\subsubsection{Calculus on path space}
The concept of non-anticipative functionals is fundamental in the CF approach.
\begin{maar}[Non-anticipative functional]
A non-anticipative functional on $D([0,T],U)$ is a family of maps $F=(F_t)_{t\in [0,T]}$ s.t. $F_t:D([0,t],U)\mapsto \R$.
\end{maar}
Now $F$ can be seen as a functional on
\begin{equation*}
\Psi=\bigcup_{t\in[0,T]}D([0,t],U).
\end{equation*}
For the measurability issues we refer to~\citet{cont}. 

Let $x$ be a path on $D([0,T],U)$ and $0\leq t <t+h\leq T$. The horizontal extension $x_{t,h}\in D([0,t+h],U)$ of a path $x_t\in D([0,t],U)$ is defined as
\begin{align*}
&x_{t,h}(u)=x(u), \quad u \in [0,t]\\
&x_{t,h}(u)=x(t), \quad u \in(t,t+h]. 
\end{align*}
Now we are ready to define the horizontal derivative.
\begin{maar}[Horizontal derivative]
The horizontal derivative of a non-anticipative functional $F$ at $x\in D([0,t],U)$ is defined as
\begin{equation*}
\mathcal{D}_tF(x)=\lim_{h \downarrow 0}\frac{F_{t+h}(x_{t,h})-F_t(x)}{h}
\end{equation*}
if the limit exists. If the limit is defined for all $x\in \Psi$, then
\begin{equation*}
\mathcal{D}_t F:D([0,t],U)\mapsto \R,  \quad x\mapsto \mathcal{D}_tF(x)
\end{equation*}
defines a non-anticipative functional $\mathcal{D}F=(\mathcal{D}_tF)_{t\in[0,T)}$. This non-anticipative functional is called the horizontal derivative of $F$.
\end{maar}
Vertical derivative is another functional derivative concept for non-anticipative functionals. Let $h\in \R$, $t\in [0,T]$. The vertical perturbation $x_t^h\in D([0,t])$ of $x_t\in D([0,t],U)$ is defined as
\begin{align*}
&x_t^h(u)=x(u), \quad u \in[0,t) \quad \text{and}\\
&x_t^h(t)=x(t)+h.
\end{align*}
For $h$ small enough, $x^h_t\in D([0,t],U)$.
\begin{maar}[Vertical derivative]
The vertical derivative of a functional $F$ at $x\in D([0,t],U)$ is defined as
\begin{equation*}
\nabla_x F_t(x)=\lim_{h \rightarrow 0}\frac{F_t(x_t^h)-F_t(x)}{h}
\end{equation*}
if the limit exists. If the limit is defined for all $x\in \Psi$, then
\begin{equation*}
\nabla_xF:D([0,T],U)\mapsto \R, \quad x\mapsto \nabla_xF_t(x)
\end{equation*}
defines a non-anticipative functional $\nabla_x F= (\nabla_x F_t)_{t\in[0,T]}$. This non-anticipative functional is called the vertical derivative of $F$.
\end{maar}
Next we will present some continuity concepts for non-anticipative functionals.
\begin{maar}[Continuity at fixed times]
A non-anticipative functional $(F_t)_{t\in [0,T]}$ is continuous at fixed times if $\forall t\in [0,T]$
\begin{equation*}
F_t:D([0,t],U)\mapsto \R
\end{equation*}
is continuous for the supremum norm.
\end{maar}
The following metric is an extension of the metric induced by the supremum norm for the paths that are not necessarily defined on the same interval.
\begin{maar}
Let $T\geq t+h\geq t\geq0$, $x\in D([0,t])$, $x'\in D([0,t+h])$. Define
\begin{equation*}
d_\infty(x,x')=\sup_{u\in[0,t+h]}|x_{t,h}(u)-x'(u)|+h.
\end{equation*}
\end{maar}
Left-continuity is another continuity concept for non-anticipative functionals.
\begin{maar}[Left-continuous functionals]
A non-anticipative functional $F\in \mathbb{F}_l^\infty$ if
\begin{align*}
&\forall t \in [0,T], \forall \epsilon>0, \forall x \in D([0,t],U), \exists \eta >0, \forall h\in [0,t],\\ &\forall x' \in D([0,t-h],U), d_\infty(x,x')<\eta \implies \left | F_t(x)-F_{t-h}(x')\right |<\epsilon.
\end{align*}
The elements of $\mathbb{F}^\infty_l$ are called left-continuous functionals.
\end{maar}
Analogously one could define right-continuous functionals $\mathbb{F}_r^\infty$. Next we define boundedness-preserving functionals.
\begin{maar}[Boundedness-preserving functionals]
A non-anticipative functional $F\in \mathbb{B}$ if for all compact $K\subset U$
\begin{align*}
\exists C>0, \forall t\leq T, \forall x\in D([0,t],K),|F_t(x)|\leq C.
\end{align*}
The elements of $\mathbb{B}$ are called boundedness-preserving functionals.
\end{maar}
Boundedness-preserving functionals satisfy the following weaker local boundedness condition.
\begin{maar}[Locally bounded functionals]
A functional $F$ is locally bounded if
\begin{align*}
&\forall x \in D([0,T],U), \exists C>0, \eta>0, \forall t \in [0,T], \forall x'\in D([0,t],U),
d_\infty(x_t,x')<\eta\\ &\implies  \left | F_t(x')\right |\leq C.
\end{align*}
\end{maar}
Let us define the class of two times vertically and once horizontally differentiable non-anticipative functionals.
\begin{maar}[$\mathbb{C}^{1,2}$]
The class $\mathbb{C}^{1,2}$ is defined as the set of non-anticipative functionals that are once horizontally and twice vertically differentiable $\forall t\in [0,T)$ and $\forall x\in D([0,t],U)$  s.t.
\begin{enumerate}
\item $\mathcal{D}F$ continuous at fixed times and
\item $F,\nabla_xF, \nabla^2_x F \in \mathbb{F}^\infty_l$.
\end{enumerate}
\end{maar}
Functional change of variables formula is the key theorem of CF approach.
\begin{theor}[Functional change of variables formula]
\label{fcov}
Let $x\in C([0,T],U)$ s.t. $x$ has finite quadratic variation along a sequence of partitions $(\pi_n)_{n=1}^\infty$, where $\pi_n=\{0=t^n_0<\dots<t^n_{k(n)}=T\}$ and $|\pi_n|\rightarrow 0$. Set
\begin{equation*}
x^n(t)=\sum_{j=0}^{k(n)-1}x(t_{j+1}^n)1_{[t_j^n,t_{j+1}^n)}(t)+x(T)1_{\{T\}}(t).
\end{equation*}
Let $F\in \mathbb{C}^{1,2}([0,T))$ and $\nabla_x^2F, \mathcal{D}F$ satisfy the local boundedness property. Then the limit
\begin{equation*}
\lim_{n\rightarrow \infty}\sum_{i=0}^{k(n)-1}\nabla_xF_ {t^n_i}(x^n_{t^n_i-})(x(t^n_{i+1})-x(t^n_i))=:\int_0^T \nabla_xF(x_u)d x(u)
\end{equation*}
exists. The limiting object is called F\"ollmer integral. Furthermore, we have the following functional change of variables formula:
\begin{align*}
F_T(x_T)=&F_0(x_0)+\int_0^T \mathcal{D}_u F(x_u)du+\frac{1}{2}\int_0^T\nabla_x^2 F_u(x_u)d[x](u)+\int_0^T \nabla_x F_u(x_u)dx(u).
\end{align*}
\end{theor}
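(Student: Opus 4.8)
\emph{Proof sketch.} The plan is to adapt the telescoping scheme of the CF approach, working first on a fixed interval $[0,t]$ with $t<T$ --- where $F$ has the full $\mathbb{C}^{1,2}$ regularity --- and only at the end letting $t\uparrow T$. Fix such a $t$; for each $n$ let $t^n_m$ be the largest point of $\pi_n$ with $t^n_m\le t$. Since $x$ is uniformly continuous, $x^n\to x$ uniformly, and since $F\in\mathbb{F}^\infty_l$ (hence continuous at fixed times), $F_{t^n_m}(x^n_{t^n_m-})\to F_t(x_t)$ and $F_0(x^n_{0-})\to F_0(x_0)$. I would then telescope
\begin{equation*}
F_{t^n_m}(x^n_{t^n_m-})-F_0(x^n_{0-})=\sum_{i=0}^{m-1}\Bigl(F_{t^n_{i+1}}(x^n_{t^n_{i+1}-})-F_{t^n_i}(x^n_{t^n_i-})\Bigr)
\end{equation*}
and split each summand into a \emph{vertical} perturbation at time $t^n_i$ followed by a \emph{horizontal} extension over $[t^n_i,t^n_{i+1}]$: writing $h_i=x(t^n_{i+1})-x(t^n_i)$,
\begin{equation*}
F_{t^n_{i+1}}(x^n_{t^n_{i+1}-})-F_{t^n_i}(x^n_{t^n_i-})=\Bigl[F_{t^n_{i+1}}(w^n_i)-F_{t^n_i}\bigl((x^n_{t^n_i-})^{h_i}\bigr)\Bigr]+\Bigl[F_{t^n_i}\bigl((x^n_{t^n_i-})^{h_i}\bigr)-F_{t^n_i}(x^n_{t^n_i-})\Bigr],
\end{equation*}
where $w^n_i$ is the horizontal extension of $(x^n_{t^n_i-})^{h_i}$ to $[0,t^n_{i+1}]$; using the precise form of $x^n$ one checks $w^n_i=x^n_{t^n_{i+1}-}$, so the two brackets sum to the summand and the telescoping is exact.

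The first bracket is a pure horizontal increment; by the definition of $\mathcal{D}F$ (and the elementary fact that a continuous right derivative integrates back to the function) it equals $\int_0^{t^n_{i+1}-t^n_i}\mathcal{D}_{t^n_i+s}F(\cdot)\,ds$, so summing over $i$ yields a Riemann-type approximation of $\int_0^t\mathcal{D}_uF(x_u)\,du$ which converges by the continuity at fixed times of $\mathcal{D}F$ together with its local boundedness (dominated convergence, the approximating paths staying in a fixed $d_\infty$-neighbourhood of $x$). The second bracket is a pure vertical increment, which I would expand by a one-dimensional second order Taylor formula applied to $u\mapsto F_{t^n_i}\bigl((x^n_{t^n_i-})^{u}\bigr)$, whose first two derivatives are $\nabla_x F$ and $\nabla_x^2 F$:
\begin{equation*}
F_{t^n_i}\bigl((x^n_{t^n_i-})^{h_i}\bigr)-F_{t^n_i}(x^n_{t^n_i-})=\nabla_x F_{t^n_i}(x^n_{t^n_i-})\,h_i+\tfrac12\,\nabla_x^2 F_{t^n_i}(x^n_{t^n_i-})\,h_i^2+R^n_i,\qquad |R^n_i|\le\tfrac12\,h_i^2\,\rho^n_i,
\end{equation*}
with $\rho^n_i=\sup_{|u|\le|h_i|}\bigl|\nabla_x^2 F_{t^n_i}\bigl((x^n_{t^n_i-})^u\bigr)-\nabla_x^2 F_{t^n_i}(x^n_{t^n_i-})\bigr|$.

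Summing the Taylor expansion over $i$: the first order terms are exactly the partial F\"ollmer sums $\sum_i\nabla_x F_{t^n_i}(x^n_{t^n_i-})(x(t^n_{i+1})-x(t^n_i))$; the second order terms $\tfrac12\sum_i\nabla_x^2 F_{t^n_i}(x^n_{t^n_i-})(x(t^n_{i+1})-x(t^n_i))^2$ converge to $\tfrac12\int_0^t\nabla_x^2 F_u(x_u)\,d[x](u)$, since the discrete measures $\sum_i(x(t^n_{i+1})-x(t^n_i))^2\delta_{t^n_i}$ converge weakly to $d[x]$ (using the continuity of $s\mapsto[x](s)$) while the integrand is bounded and left-continuous along $x$; and $\sum_i R^n_i$ is dominated by $\tfrac12\bigl(\sup_i\rho^n_i\bigr)\sum_i(x(t^n_{i+1})-x(t^n_i))^2$, where the sum is bounded (it tends to $[x](t)$) and $\sup_i\rho^n_i\to0$. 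Rearranging the telescoped identity therefore shows both that the F\"ollmer sums converge and that the change of variables formula holds on $[0,t]$; then letting $t\uparrow T$, the left-hand side tends to $F_T(x_T)$ by left-continuity of $F$ and the three remaining terms converge by local boundedness of $\mathcal{D}F,\nabla_x^2 F$ and continuity of $[x]$ --- which is also the sense in which the F\"ollmer integral over $[0,T]$ and the formula at $T$ are to be read.

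The main obstacle is the step $\sup_i\rho^n_i\to0$, and the companion fact that in the Riemann--Stieltjes term one may replace $\nabla_x^2 F_{t^n_i}(x^n_{t^n_i-})$ by $\nabla_x^2 F_{t^n_i}(x_{t^n_i})$ up to an error tending to $0$: the hypotheses supply only \emph{pointwise}, non-uniform left-continuity and local boundedness of $\nabla_x^2 F$ (and $\mathcal{D}F$), whereas one needs estimates \emph{uniform in $i$} along partitions whose mesh shrinks. The device, which I would employ, is a compactness argument: for $n$ large all the paths $x^n_{t^n_i-}$, $x_{t^n_i}$ and their vertical perturbations by $|u|\le|h_i|$ lie in a fixed $d_\infty$-neighbourhood of the path $x$, whose image $x([0,T])$ is a compact subset of $U$; combined with the local boundedness and the $\mathbb{F}^\infty_l$ property this produces a uniform modulus of continuity and, incidentally, keeps all these paths $U$-valued for $n$ large. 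The remaining points --- the identity $w^n_i=x^n_{t^n_{i+1}-}$, the convergences $F_{t^n_m}(x^n_{t^n_m-})\to F_t(x_t)$, $F_0(x^n_{0-})\to F_0(x_0)$, $\mathcal{D}_uF(\cdot)\to\mathcal{D}_uF(x_u)$, and the weak convergence of the quadratic-variation measures --- are then routine.
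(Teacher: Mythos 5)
The paper does not prove this theorem: it is imported verbatim from Cont and Fourni\'e (and Fourni\'e's thesis) as a preliminary, so there is no in-paper argument to compare yours against. Your sketch is, in substance, exactly the original Cont--Fourni\'e proof: the same piecewise-constant approximation $x^n$, the same exact telescoping with each increment split into a vertical perturbation by $h_i=x(t^n_{i+1})-x(t^n_i)$ followed by a horizontal extension (your verification that $(x^n_{t^n_i-})^{h_i}=x^n_{t^n_i}$ and $w^n_i=x^n_{t^n_{i+1}-}$ is indeed what makes the telescoping exact), the fundamental theorem of calculus for the horizontal bracket, a second-order Taylor expansion in the vertical perturbation, F\"ollmer's weak-convergence argument for the quadratic-variation term, and the passage $t\uparrow T$ at the end. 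I see no step that would fail.

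The only genuinely thin spot is the one you flag yourself, and your proposed fix is stated slightly too optimistically. Compactness of $x([0,T])\subset U$ together with local boundedness does give a single constant dominating $|\nabla^2_xF|$ and $|\mathcal{D}F|$ over all paths in play, and keeps the perturbed paths $U$-valued for large $n$; but it does not by itself convert the pointwise left-continuity encoded in $\mathbb{F}^\infty_l$ into a uniform modulus of continuity. That property hands you an $\eta$ for each path separately, it is one-sided (approximation from shorter time horizons), and the perturbed discretized paths lie only \emph{near} the compact set $\{x_u: u\in[0,t]\}$ in $d_\infty$, not inside it, so the shortcut ``continuous on a compact set, hence uniformly continuous'' does not apply directly. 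The step is closed by a subsequence argument instead: if $\sup_i\rho^n_i\ge\epsilon_0$ along a subsequence, pick indices $i_n$ realizing this, extract $t^n_{i_n}\to s$, and note that the paths $(x^n_{t^n_{i_n}-})^{u}$ with $|u|\le|h_{i_n}|$ converge to $x_s$ in $d_\infty$ (uniform continuity of $x$ and $|\pi_n|\to0$ force both the sup-distance and the vertical shifts to vanish); left-continuity of $\nabla^2_xF$ at the single path $x_s$ then contradicts $\rho^{n}_{i_n}\ge\epsilon_0$. The same device justifies replacing $\nabla^2_xF_{t^n_i}(x^n_{t^n_i-})$ by $\nabla^2_xF_{t^n_i}(x_{t^n_i})$ in the Riemann--Stieltjes term and handles $\mathcal{D}F$ in the horizontal sum. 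With that made explicit, your proof is complete and coincides with the one in the cited source.
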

The F\"ollmer integral depends on the sequence of partitions. However, the sequence is usually fixed and thus it is not included in the notation. 

The main difference of the F\"ollmer integral and forward integral is that in F\"ollmer integral the integrand is discretized before going to the limit. The F\"ollmer integral is only defined for the integrands that are of the form $\nabla_xF_u(x_u)$.

\subsubsection{Functional martingale representation theorem}
\label{fmrt}
The main reference in this subsection is~\citet{cont2}. Theorem~\ref{mgrep} is the starting point for obtaining functional hedges of CF type.

We define the following class of functionals that will be used in the context of functional martingale representation theorem.
\begin{maar}
\begin{equation*}
\mathbb{C}^{1,2}_b=\{F\in\mathbb{C}^{1,2} | F,\mathcal{D}F, \nabla_x F, \nabla^2_xF \in \mathbb{B}\}.
\end{equation*}
\end{maar}
The following class of processes is used when obtaining functional martingale representation theorem.
\begin{maar}
Let $X$ be a continuous martingale. We denote by $\mathcal{C}^{1,2}_b$ the set of processes $Y$ s.t. $Y(t)=F_t(X_t)$ almost surely for some $F\in \mathbb{C}^{1,2}_b$.
\end{maar}
Note that the representing functional $F$ does not need to be unique. Next we define cylindrical functionals that are in some sense simple elements of $\mathbb{C}^{1,2}_b$.
\begin{maar}[Cylindrical functionals]
Let $\{0=t_0<t_1<\dots<t_n=T\}$ be a partition of $[0,T]$. Let $\epsilon>0$ and $(f_i)_{i=1}^n$ continuous functions $f_i: U^{i+1}\times [t_{i-1},t_{i}+\epsilon)\mapsto \R$ satisfying $\forall i=1,\dots,n$ and $\forall x^0,\dots,x^{i-1}\in U$ the map $(x,t)\mapsto f_i(x^0,\dots,x^{i-1},x,t)$ from $U \times [t_{i-1},t_i]$ is $C^{1,2}$. Let also $\forall i=2,\dots, n$, $f_i(\cdot,t_i)=f_{i-1}(\cdot,t_i)$. Then the functional
\begin{equation*}
F_t(x_t)=\sum_{i=1}^n1_{(t_{i-1},t_i]}(t)f_i(x(t_0),\dots,x(t_{i-1}),x(t),t)
\end{equation*}
is called a cylindrical functional.
\end{maar}
\begin{maar}[Cylindrical integrands]
Let $\{0=t_0<t_1<\dots<t_n=T\}$ be a fixed partition of the interval $[0,T]$. Let  $(f_j)_{j=1}^n$ be a sequence s.t. $f_j$ is a continuous function of $j$ arguments. Define a cylindrical functional
\begin{equation*}
F_t(x_t)=\sum_{j=1}^n 1_{(t_{j-1},T]}(t)f_j(x(t_0),\dots,x(t_{j-1}))(x(t)-x(t_{j-1})).
\end{equation*}
The vertical derivative
\begin{equation*}
\nabla_xF_t(x_t)=\sum_{j=1}^nf_j(x(t_0),\dots,x(t_{j-1}))1_{(t_{j-1},T]}(t) \in \mathbb{F}^{\infty}_l\cap \mathbb{B}
\end{equation*}
is called a cylindrical integrand.
\end{maar}
Note that the vertical derivative of a cylindrical integrand is of the ``cylindrical'' form.

Let $X$ be a continuous martingale and $(\mathcal{F}^X_t)_{t\in[0,T]}$ be the filtration generated by $X$. Let $H \in \mathcal{F}^X_T$ s.t. $\E |H|<\infty$. And define $Y(t)=\E [H|\mathcal{F}^X_t]$.
\begin{theor}
\label{mgrep}
If $Y\in \mathcal{C}^{1,2}_b$, then
\begin{equation*}
Y(T)=\E Y(T)+\int_0^T \nabla_x F_t(X_t)dX(t),
\end{equation*}
where $Y(t)=F_t(X_t)$ and the stochastic integral is the It\^o integral.
\end{theor}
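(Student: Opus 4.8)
The plan is to apply the functional change of variables formula of Theorem~\ref{fcov} to the representing functional $F$ evaluated along the paths of the continuous martingale $X$, and then to use the martingale property of $Y$ to show that the resulting drift term vanishes. This reduces the probabilistic martingale representation to the pathwise functional It\^o formula plus the classical fact that a continuous local martingale of finite variation is constant.

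First I would fix a sequence of partitions $(\pi_n)$ of $[0,T]$ (for instance the dyadic one) along which $X$ has finite quadratic variation almost surely; since $X$ is a continuous semimartingale this is possible, and the pathwise $[X]$ then agrees with the usual quadratic variation. On the event of full probability on which $x=X(\omega)$ has finite quadratic variation, Theorem~\ref{fcov} applies --- the local boundedness of $\nabla_x^2 F$ and $\mathcal{D}F$ is automatic from $F\in\mathbb{C}^{1,2}_b$ --- and gives
\begin{equation*}
F_T(X_T)=F_0(X_0)+\int_0^T \mathcal{D}_u F(X_u)\,du+\frac{1}{2}\int_0^T \nabla_x^2 F_u(X_u)\,d[X](u)+\int_0^T \nabla_x F_u(X_u)\,dX(u),
\end{equation*}
where the last term is a priori the pathwise F\"ollmer integral. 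The key technical input is then that, because $\nabla_x F\in\mathbb{F}^\infty_l\cap\mathbb{B}$ and $X$ is a continuous semimartingale, this F\"ollmer integral coincides almost surely with the It\^o stochastic integral $\int_0^T \nabla_x F_t(X_t)\,dX(t)$; this identification, proved in~\citet{cont,cont2} via a localisation and dominated-convergence argument on the Riemann sums, is what links the pathwise construction to the probabilistic one.

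With the It\^o formula in hand, set $N(t)=Y(t)-F_0(X_0)-\int_0^t \nabla_x F_u(X_u)\,dX(u)$. Using $Y(t)=F_t(X_t)$ and the formula above, $N(t)=\int_0^t \mathcal{D}_u F(X_u)\,du+\tfrac{1}{2}\int_0^t \nabla_x^2 F_u(X_u)\,d[X](u)$, which is a continuous process of finite variation vanishing at $t=0$; finiteness of the variation follows since $\mathcal{D}F,\nabla_x^2F\in\mathbb{B}$ and $X([0,T])$ is a.s. compact. On the other hand $Y$ is a uniformly integrable martingale (as $\E|H|<\infty$) admitting a continuous modification --- continuity of $t\mapsto F_t(X_t)$ along the continuous path $X$ follows from the regularity assumptions on $F$ --- and the It\^o integral of $\nabla_x F(X)$ is a continuous local martingale, so $N$ is a continuous local martingale. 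A continuous local martingale of finite variation starting at $0$ is identically zero, hence $N\equiv 0$, i.e. $Y(T)=F_0(X_0)+\int_0^T \nabla_x F_t(X_t)\,dX(t)$. Finally $F_0(X_0)=Y(0)=\E[H\mid\mathcal{F}^X_0]=\E H=\E Y(T)$, using triviality modulo null sets of $\mathcal{F}^X_0$ under the usual conditions together with the martingale property, which yields the claimed representation.

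The main obstacle I anticipate is the identification of the pathwise F\"ollmer integral with the It\^o integral, together with the attendant bookkeeping: one must choose the partition sequence so that $X$ has finite quadratic variation along it almost surely, verify that the pathwise change-of-variables formula can be applied $\omega$ by $\omega$, and then upgrade the statement ``$Y(t)=F_t(X_t)$ a.s. for each $t$'' to an equality of continuous processes before the drift-killing argument can be invoked. The remaining steps --- integrability of the drift terms and the local-martingale property of $N$ --- are routine given the boundedness-preserving hypotheses on $F$ and its derivatives.
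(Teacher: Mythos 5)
Your argument is correct, but note that the paper does not prove Theorem~\ref{mgrep} at all: it is quoted as a background result from \citet{cont2}, and your proof is essentially the one given there --- apply the functional change of variables formula along the paths of $X$, identify the F\"ollmer integral with the It\^o integral, and kill the drift by observing that $Y(t)-Y(0)-\int_0^t\nabla_xF_u(X_u)dX(u)$ is a continuous local martingale of finite variation. The only small imprecision is attributing $Y(0)=\E H$ to the ``usual conditions''; what is actually needed is that $X(0)$ is deterministic (as it is throughout the paper, where paths start at $s_0$), since completeness and right-continuity of the filtration alone do not make $\mathcal{F}^X_0$ trivial.
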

In this case $\nabla_x F$ is called the CF type pathwise hedging strategy of functional $F_T$.

Theorem~\ref{mgrep} is a pathwise martingale representation theorem, but for restricted class of processes. The result can be generalized for more general class of processes $Y$, but only in the weak sense.

First we need the definition of the following classes. In the following, $X$ is a continuous and square integrable martingale.
\begin{maar}
We call $\mathcal{L}^2(X)$ the Hilbert space of progressively measurable processes $\phi$ s.t.
\begin{equation*}
\|\phi\|^2_{\mathcal{L}²(X)}=\E \left( \int_0 ^T \phi^2(s)d[X](s)\right)<\infty.
\end{equation*}
We also define
\begin{equation*}
\mathcal{I}^2(X)=\left\{Y=\int_0^\cdot \phi(s)dX(s) | \phi \in \mathcal{L}^2(X)\right\}
\end{equation*}
equipped with the norm $\|Y\|^2_2=\E(Y(T)^2)$.
\end{maar}
The following space of test processes is needed, when we define vertical derivatives with respect to processes.
\begin{maar}[Space of test processes]
\begin{equation*}
D(X)=\mathcal{C}^{1,2}_b(X)\cap \mathcal{I}^2(X)
\end{equation*}
is called the space of test processes.
\end{maar}
Now we are ready to define the vertical derivative with respect to a process.
\begin{maar}
Let $Y\in D(X)$ and $Y(t)=F_t(X_t)$. Then $\nabla_X Y \in \mathcal{L}^2(X)$ is defined as $(\nabla_X Y)(t)=\nabla_xF_t(X_t)$.
\end{maar}
Note that the definition does not depend on the selection of $F$ (outside of an evanescent set).
\begin{theor}
The vertical derivative $\nabla_X$ is closable on $\mathcal{I}^2(X)$. Its closure defines a bijective isometry
\begin{align*}
\nabla_X: \mathcal{I}^2(X)\mapsto \mathcal{L}^2(X)\\
\int_0^\cdot \phi dX \mapsto \phi,
\end{align*}
where the stochastic integral is the It\^o integral.
\end{theor}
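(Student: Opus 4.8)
The plan is to prove the statement in three stages: first show that $\nabla_X$ acts on the dense domain $D(X)$ as a linear isometry onto a dense subspace of $\mathcal{L}^2(X)$; then deduce closability for free from the isometry property; and finally identify the closure with the inverse of the Itô integral by a density argument.

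\emph{Stage 1: the isometry on $D(X)$.} Take $Y\in D(X)=\mathcal{C}^{1,2}_b(X)\cap\mathcal{I}^2(X)$, so that $Y(t)=F_t(X_t)$ for some $F\in\mathbb{C}^{1,2}_b$ and simultaneously $Y=\int_0^\cdot\psi\,dX$ for some $\psi\in\mathcal{L}^2(X)$; in particular $Y(0)=0$, so $\E Y(T)=0$. Since $Y\in\mathcal{C}^{1,2}_b(X)$ and $Y$ is by construction a square-integrable martingale, Theorem~\ref{mgrep} gives $Y(T)=\int_0^T\nabla_xF_t(X_t)\,dX(t)=\int_0^T(\nabla_XY)(t)\,dX(t)$. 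Comparing with $Y(T)=\int_0^T\psi\,dX$ and using injectivity of the Itô integral on $\mathcal{L}^2(X)$ yields $\nabla_XY=\psi$ in $\mathcal{L}^2(X)$, so on $D(X)$ the operator $\nabla_X$ is exactly the map $\int_0^\cdot\phi\,dX\mapsto\phi$. The Itô isometry then gives $\|\nabla_XY\|_{\mathcal{L}^2(X)}^2=\E\int_0^T(\nabla_XY)^2\,d[X]=\E(Y(T)^2)=\|Y\|_2^2$, i.e.\ $\nabla_X\colon D(X)\to\mathcal{L}^2(X)$ is a linear isometry.

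\emph{Stage 2: density.} I would show that $D(X)$ is dense in $\mathcal{I}^2(X)$ and, equivalently (by Stage 1), that $\nabla_X(D(X))$ is dense in $\mathcal{L}^2(X)$. To this end, let $F$ be a cylindrical functional of the form appearing in the definition of cylindrical integrands, with $f_j$ continuous. One checks $\mathcal{D}F\equiv 0$ (hence $\mathcal{D}F$ is continuous at fixed times), $\nabla_x^2F\equiv 0$, and $F,\nabla_xF\in\mathbb{F}_l^\infty\cap\mathbb{B}$, so $F\in\mathbb{C}^{1,2}_b$; moreover $Y(t)=F_t(X_t)$ equals the Itô integral of the cylindrical integrand $\nabla_xF\in\mathcal{L}^2(X)$, so $Y\in\mathcal{C}^{1,2}_b(X)\cap\mathcal{I}^2(X)=D(X)$. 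It then suffices to argue that cylindrical integrands are dense in $\mathcal{L}^2(X)$: simple predictable processes are dense in $\mathcal{L}^2(X)$, and since $X$ is continuous, any bounded $\mathcal{F}^X_{t_{j-1}}$-measurable coefficient can be approximated in $L^2$ by continuous functions of finitely many path values $X(s_0),\dots,X(s_{j-1})$, which after passing to a common refinement of the partitions puts the approximating integrand in cylindrical form. Pushing these approximations through the Itô isometry gives density of $D(X)$ in $\mathcal{I}^2(X)$. I expect this stage to be the main obstacle: the delicate points are the bookkeeping needed to verify that cylindrical functionals satisfy all requirements of $\mathbb{C}^{1,2}_b$ (so that Theorem~\ref{mgrep} applies to them and they indeed lie in $D(X)$), and the regularization of $\mathcal{F}^X$-measurable coefficients into \emph{continuous} functions of finitely many values, which relies on the path-continuity of $X$.

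\emph{Stage 3: closability and identification of the closure.} A densely defined linear isometry is automatically closable: if $Y_n\to 0$ in $\mathcal{I}^2(X)$ and $\nabla_XY_n\to\psi$ in $\mathcal{L}^2(X)$, then $\|\psi\|_{\mathcal{L}^2(X)}=\lim_n\|\nabla_XY_n\|_{\mathcal{L}^2(X)}=\lim_n\|Y_n\|_2=0$, so $\psi=0$. Hence $\nabla_X$ has a closure $\overline{\nabla_X}$, which is again an isometry and is defined on the closure of $D(X)$, namely all of $\mathcal{I}^2(X)$ by Stage 2. Let $J\colon\mathcal{I}^2(X)\to\mathcal{L}^2(X)$, $\int_0^\cdot\phi\,dX\mapsto\phi$, be the inverse of the Itô integral map; $J$ is a bijective isometry precisely by the Itô isometry together with the definition of $\mathcal{I}^2(X)$ as the image of the Itô integral. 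By Stage 1, $\overline{\nabla_X}$ and $J$ agree on the dense set $D(X)$; since both are continuous, they coincide on $\mathcal{I}^2(X)$. Thus $\overline{\nabla_X}=J$ is a bijective isometry with the stated formula, which completes the proof.
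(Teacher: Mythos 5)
This theorem is quoted by the paper as background from \citet{cont2} (and Fourni\'e's thesis); the paper gives no proof of its own, so there is nothing internal to compare against. Your reconstruction is the standard Cont--Fourni\'e argument and its logical skeleton is correct: on $D(X)$ the operator $\nabla_X$ coincides with the inverse of the It\^o integral by Theorem~\ref{mgrep} plus the It\^o isometry (your Stage~1), a densely defined isometry is closable and its closure agrees with the continuous map $J$ on a dense set (your Stage~3), and everything reduces to the density of $D(X)$ in $\mathcal{I}^2(X)$ (your Stage~2), which is exactly the content of the lemma the paper itself only cites (\citet[Lemma~3.1]{fournie}). You correctly identify Stage~2 as the real work. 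Two details there deserve more care than your sketch gives them. First, membership of the cylindrical objects in the right spaces: to have $\nabla_xF\in\mathcal{L}^2(X)$ and $F\in\mathbb{B}$ you should take the coefficients $f_j$ bounded and continuous (continuity alone gives boundedness only on compacts, and $X$ need not stay in a compact); boundedness costs nothing for the density argument since you approximate bounded simple predictable integrands anyway. Second, the verification that these cylindrical functionals satisfy all the regularity in $\mathbb{C}^{1,2}_b$ (in particular left-continuity across the partition points $t_{j-1}$, where the indicator $1_{(t_{j-1},T]}$ switches on but the new summand vanishes continuously) is routine but should be written out, since Theorem~\ref{mgrep} is what turns ``$Y\in\mathcal{I}^2(X)$ with integrand $\psi$'' into ``$\nabla_XY=\psi$''. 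With those points filled in, your proof is complete and matches the argument in the cited source.
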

This means that $\nabla_X$ is the adjoint operator of the It\^o integral.
\section{Results}
\label{results}
Let $X$ be a continuous martingale with respect to its own filtration $(\mathcal{F}^X_t)_{t\in[0,T]}$. Let $H\in L^1$ be $\mathcal{F}_T^X$-measurable random variable and $Y(t)=\E[H|\mathcal{F}_t^X]$. If $Y\in \mathcal{C}^{1,2}_b(X)$ s.t. $Y(t)=F_t(X_t)$ then by theorem~\ref{mgrep} $\forall t\in[0,T]$ we have the following representation
\begin{align*}
Y(t)=&\E Y(t)+\int_0^t \nabla_xF_s(X_s)dX(s)\\
=&F_0(X_0)+\int_0^t\nabla_xF_s(X_s)dX(s),
\end{align*}
where the stochastic integral is the It\^o integral.

Next we will show that the hedging result is robust in the model class $\mathcal{M}_\sigma$ for the functional $F$. The main differences to the martingale case are that the initial value cannot be understood as an expectation and the stochastic integral is not an It\^o integral but a F\"ollmer integral.
\begin{theor}
\label{thm1}
Let $(\Omega,\mathcal{F},X,(\mathcal{F}_t^X)_{t\in[0,T]},\Pro)\in \mathcal{M}_\sigma$ s.t. $Y\in \mathcal{C}^{1,2}_b(X)$ with $F_t(X_t)=Y(t)$. Then for all $(\tilde{\Omega},\tilde{\mathcal{F}},Z,(\mathcal{F}_t^Z)_{t\in[0,T]},\tilde{\Pro})\in \mathcal{M}_\sigma$ and $\forall t\in [0,T]$
\begin{equation*}
F_t(Z_t)=F_0(Z_0)+\int_0^t\nabla_xF_s(Z_s)dZ(s), \quad \tilde{\Pro}-a.s.
\end{equation*} 
where the stochastic integral is understood as a F\"ollmer integral. 
\end{theor}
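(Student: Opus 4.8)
The plan is to transfer the functional identity $F_t(X_t) = F_0(X_0) + \int_0^t \nabla_x F_s(X_s)\,dX(s)$ from the martingale $X$ to an arbitrary $Z$ in $\mathcal{M}_\sigma$ by a pathwise argument, exploiting that both processes live on the same path space $C_{\sigma,s_0}$ and share the quadratic variation structure $d[S](t) = \sigma^2(S(t))\,dt$. The key observation is that the functional change of variables formula (Theorem~\ref{fcov}) is a \emph{pathwise} statement: it applies to every continuous path $x$ of finite quadratic variation along $(\pi_n)$, regardless of any probability measure. So the first step is to apply Theorem~\ref{fcov} to the paths of $Z$. Since $Z \in \mathcal{M}_\sigma$, we have $\tilde\Pro(Z \in C_{\sigma,s_0}) = 1$ and $Z$ is a quadratic variation process with $d[Z](t) = \sigma^2(Z(t))\,dt$; because $F \in \mathbb{C}^{1,2}_b$, the hypotheses of Theorem~\ref{fcov} (local boundedness of $\nabla_x^2 F$ and $\mathcal{D}F$, which follows from $F \in \mathbb{C}^{1,2}_b \subset \mathbb{C}^{1,2}$ with all derivatives in $\mathbb{B}$) are met $\tilde\Pro$-a.s. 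Hence $\tilde\Pro$-a.s. and for all $t \in [0,T]$,
\begin{equation*}
F_t(Z_t) = F_0(Z_0) + \int_0^t \mathcal{D}_u F(Z_u)\,du + \frac{1}{2}\int_0^t \nabla_x^2 F_u(Z_u)\,d[Z](u) + \int_0^t \nabla_x F_u(Z_u)\,dZ(u),
\end{equation*}
where the last integral is the F\"ollmer integral.

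The second step is to show that the drift-type terms vanish, i.e. that $\int_0^t \mathcal{D}_u F(Z_u)\,du + \frac{1}{2}\int_0^t \nabla_x^2 F_u(Z_u)\,\sigma^2(Z(u))\,du = 0$ $\tilde\Pro$-a.s. To see this, apply Theorem~\ref{fcov} along the \emph{original} martingale $X$ as well. Since $X \in \mathcal{M}_\sigma$, the same pathwise decomposition holds for the paths of $X$; but by Theorem~\ref{mgrep} we also know $F_t(X_t) = F_0(X_0) + \int_0^t \nabla_x F_s(X_s)\,dX(s)$ as an It\^o integral, and by the comparison between F\"ollmer and It\^o integrals for continuous semimartingales (the F\"ollmer integral of $\nabla_x F$ coincides a.s.\ with the It\^o integral against $X$), it follows that $\Pro$-a.s. $\int_0^t \mathcal{D}_u F(X_u)\,du + \frac{1}{2}\int_0^t \nabla_x^2 F_u(X_u)\,\sigma^2(X(u))\,du = 0$ for all $t$. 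Now I use the full-support / CFS structure built into $\mathcal{M}_\sigma$: since $X$ has full support in $C_{s_0}$, equivalently (after the bijection $f_\sigma$) its driving noise has full support in $C([0,T])$ with $\theta(0)=0$, the identity
\begin{equation*}
\int_0^t \mathcal{D}_u F(f_\sigma(\theta_u))\,du + \frac{1}{2}\int_0^t \nabla_x^2 F_u(f_\sigma(\theta_u))\,\sigma^2(f_\sigma(\theta(u)))\,du = 0
\end{equation*}
holds for all $t$ on a dense subset of $C([0,T])$; by continuity at fixed times of $\mathcal{D}F$ and $\nabla_x^2 F$ (and continuity of $\sigma$ and $f_\sigma$), this functional identity extends to \emph{every} $\theta \in C([0,T])$ with $\theta(0)=0$, hence to every path in $C_{\sigma,s_0}$. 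In particular it holds for $\tilde\Pro$-a.e.\ path of $Z$, which gives the desired cancellation and yields $F_t(Z_t) = F_0(Z_0) + \int_0^t \nabla_x F_u(Z_u)\,dZ(u)$ $\tilde\Pro$-a.s.

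I expect the main obstacle to be the rigorous extension in the second step: arguing that the vanishing of the sum of the two absolutely continuous terms, known only on a dense set of paths, propagates to all paths in $C_{\sigma,s_0}$. The delicate point is that one does not have pointwise continuity of the integrands $u \mapsto \mathcal{D}_u F(x_u)$ in $x$ uniformly, only continuity \emph{at fixed times} in the supremum norm; so one must integrate against $du$ and pass to the limit under the integral, which requires the local boundedness property (uniform domination on neighborhoods of a given path in $d_\infty$) to invoke dominated convergence along a sequence of dense-set paths converging uniformly to a given path of $Z$. A secondary technical point is to handle the subtle difference between the F\"ollmer integral (integrand discretized before the limit, using $x^n_{t^n_i-}$) and the forward integral appearing elsewhere in the paper, but since the statement explicitly asks for the F\"ollmer integral this is exactly what Theorem~\ref{fcov} delivers, so no reconciliation is needed there.
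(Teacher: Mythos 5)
Your proposal is correct and follows essentially the same route as the paper: apply the pathwise functional change of variables formula to $Z$, deduce from Theorem~\ref{mgrep} that the term $\int_0^t\mathcal{D}_uF(X_u)du+\frac{1}{2}\int_0^t\nabla_x^2F_u(X_u)\sigma^2(X(u))du$ vanishes $\Pro$-a.s., and transfer this to all of $C_{\sigma,s_0}$ via the full support property together with continuity of that functional (dominated convergence using local boundedness and continuity at fixed times). The only difference is presentational: the paper argues by contradiction with small balls around a hypothetical bad path, while you run the equivalent direct density-plus-continuity extension.
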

\begin{proof}
In the proof, the continuous martingale $X$ will play the role of the reference model.

By the change of variables formula, theorem~\ref{fcov}, we have that
\begin{align*}
F_t(Z_t)-F_0(Z_0)=&\int_0^t\mathcal{D}_u F(Z_u)du+\frac{1}{2}\int_0^t \nabla^2_xF_u(Z_u)d[Z](u)\\
&+\int_0^t\nabla_xF_u(Z_u)dZ(u),
\end{align*}
where the last integral is understood in the F\"ollmer sense.

The proof proceeds by contradiction. Assume that there exists $\tilde{\Omega}_1\subset \tilde{\Omega}$ with $\tilde{\Pro}(\tilde{\Omega}_1)>0$ such that
\begin{equation*}
\left|\int_0^t\mathcal{D}_u F(Z_u)du+\frac{1}{2}\int_0^t\nabla^2_xF_u(Z_u)d[Z](u)\right|>\epsilon>0
\end{equation*}
in $\tilde{\Omega}_1$. On the other hand, $\tilde{\Pro}(Z\in C_{\sigma,s_0})=1$. Thus $\exists x \in C_{\sigma,s_0}$ with quadratic variation given by $d[x](u)=\sigma^2(x(u))du$ such that
\begin{equation*}
\left|\int_0^t\mathcal{D}_u F(x_u)du+\frac{1}{2}\int_0^t \nabla^2_xF_u(x_u)d[x](u)\right|>\epsilon>0.
\end{equation*}
Let $(x^n)_{n=1}^\infty\subset C_{\sigma,s_0}$ with $d[x^n](u)=\sigma^2(x^n(u))du$ such that $x^n\rightarrow x$ in supremum norm. We know that $\mathcal{D}F$ and $\nabla^2_xF$ are continuous at fixed times and locally bounded. Thus, by the dominated convergence theorem
\begin{align*}
&\int_0^t\mathcal{D}_uF(x^n_u)du+\frac{1}{2}\int_0^t\nabla_x^2F_u(x^n_u)\sigma^2(x^n_u)du\\
\rightarrow &\int_0^t\mathcal{D}_uF(x_u)du+\frac{1}{2}\int_0^t\nabla_x^2F_u(x_u)d[x](u).
\end{align*}
Thus, each path $\tilde{\omega}\in \tilde{\Omega}_1$ has a surrounding small ball $B_{\tilde{\omega}}\subset C_{\sigma,s_0}$ and $B_{\tilde{\omega}}'=\{x\in B_{\tilde{\omega}}|d[x](u)=\sigma^2(x(u))du\}$ such that
\begin{equation}
\label{contradiction}
\left|\int_0^t\mathcal{D}_u F(\xi_u)du+\frac{1}{2}\int_0^t \nabla^2_xF_u(\xi_u)d[\xi](u)\right|>\frac{\epsilon}{2}
\end{equation}
for $\xi \in B_{\tilde{\omega}}'$. By the full support property we obtain that $\Pro(X_u\in B'_{\tilde{\omega}})=\Pro(X_u\in B_{\tilde{\omega}})>0$. On the other hand by theorem~\ref{mgrep}
\begin{equation}
\label{bv-part}
\int_0^t\mathcal D_uF(X_u)du+\frac{1}{2}\int_0^t \nabla_x^2F_u(X_u)d[X](u)=0 \quad \Pro-a.s.
\end{equation}
Thus equations~(\ref{contradiction}) and~(\ref{bv-part}) imply that $\tilde{\Pro}(\tilde{\Omega}_1)=0$, which is a contradiction. Hence, the claim holds $\tilde{\Pro}-a.s.$
\end{proof}
The theorem above as well as theorem~\ref{mgrep} can also be seen as pathwise Clark-Ocone theorems. Another approach to pathwise Clark-Ocone formulas can be found in~\cite{digirolami2,digirolami1}.
\begin{example}
\label{integralexample}
Let $H=\int_0^T X(t)dt$. Then
\begin{equation*}
Y(t)=\E [H|\mathcal{F}_t^X]=\int_0^t X(s)ds+(T-t)X(t)\in \mathcal{C}^{1,2}_b(X).
\end{equation*}
Thus, the functional $F_t(x_t)=\int_0^t x(s)ds+(T-t)x(t)$. It holds that $\mathcal{D}_tF(x_t)=0$ and $\nabla_xF_t(x_t)=T-t$. Hence,
\begin{equation*}
\int_0^TZ(s)ds=TZ(0)+\int_0^T(T-s)dZ(s),
\end{equation*}
where the stochastic integral is understood in the F\"ollmer sense.
\end{example}
The following example is from~\citet{fournie}. See the definitions for cylindrical functionals and cylindrical integrands in subsection~\ref{fmrt}.
\begin{example}
Let  $x$ be a continuous path and $F_t$ be a cylindrical functional s.t. $\nabla_x F$ is a cylindrical integrand. Then $\nabla^2_x F=0$ and $\mathcal{D}F=0$ and theorem~\ref{fcov} implies that
\begin{equation*}
F_t(x_t)=F_0(x_0)+\int_0^t\nabla_xF_s(x_s)dx(s).
\end{equation*} 
\label{cylindrichedge}
\end{example}
\begin{huom}
What is a bit surprising in example~\ref{cylindrichedge} is that the conditional full support property is not needed at all, when obtaining robust hedges. Thus the hedging strategy is not only robust in the model class $\mathcal{M}_\sigma$ but for all quadratic variation models. This is due to the special structure of the functionals.
\end{huom}
\begin{huom}
Note that the cylindrical integrands are dense in $\mathcal{L}^2(X)$, when $X$ is a continuous square integrable martingale,~\citet[Lemma~3.1.]{fournie}.
\end{huom}
\begin{huom}
Let us consider an option whose payoff depends on the end value of a cylindrical functional whose vertical derivative is a cylindrical integrand. The robust hedges for such options can be computed also using BSV approach. This is due to the following observations: the cylindrical integrands are piecewise smooth strategies. The robust hedging result of~\citet{b-s-v} remains unchanged if we extend the class of allowed strategies by replacing the condition 1. of definition~\ref{allowed} by requiring that the strategy is piece-wise smooth.
\end{huom}
For the proof of the theorem~\ref{samehedges} we need a lemma analogous to~\citet[lemma 4.5]{b-s-v}.
\begin{lemma}
\label{ctsfnl}
Let $F\in \mathbb{C}^{1,2}_b$. Then the mapping $\{x\in C_{\sigma,s_0}|d[x](u)=\sigma^2(x(u))du\}\mapsto C([0,T])$, $x \mapsto \int_0^\cdot\nabla_x F(x_s)dx(s)$ is continuous.
\end{lemma}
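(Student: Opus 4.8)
The plan is to mimic the proof strategy of~\citet[Lemma 4.5]{b-s-v}: use the functional change of variables formula (Theorem~\ref{fcov}) to rewrite the Föllmer integral $\int_0^\cdot \nabla_x F(x_s)\,dx(s)$ as a difference of terms each of which depends continuously on $x$ in the supremum norm. Concretely, for $x$ in the stated domain one has $d[x](u)=\sigma^2(x(u))\,du$, so the change of variables formula gives, for every $t$,
\begin{equation*}
\int_0^t \nabla_x F_u(x_u)\,dx(u) = F_t(x_t) - F_0(x_0) - \int_0^t \mathcal{D}_u F(x_u)\,du - \frac{1}{2}\int_0^t \nabla_x^2 F_u(x_u)\,\sigma^2(x(u))\,du.
\end{equation*}
Thus it suffices to prove that each of the four maps $x\mapsto F_t(x_t)$, $x\mapsto \int_0^t \mathcal{D}_u F(x_u)\,du$, $x\mapsto \tfrac12\int_0^t \nabla_x^2 F_u(x_u)\,\sigma^2(x(u))\,du$ (the constant $F_0(x_0)$ being trivially continuous) is continuous, and moreover that these continuities are uniform in $t$ so that the resulting map into $C([0,T])$ (sup norm on the target) is continuous.

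The first step is pointwise-in-$t$ continuity. Since $F\in\mathbb{C}^{1,2}_b\subset \mathbb{C}^{1,2}$, we have $F, \nabla_x^2 F\in\mathbb{F}_l^\infty$ and $\mathcal{D}F$ continuous at fixed times; in particular $F$ and $\nabla_x^2 F$ and $\mathcal{D}F$ are continuous at fixed times, and all four of $F,\mathcal{D}F,\nabla_x F,\nabla_x^2F$ lie in $\mathbb{B}$, hence are bounded on paths valued in a fixed compact set. Given a convergent sequence $x^n\to x$ in $C_{\sigma,s_0}$ (sup norm), all the $x^n$ together with $x$ take values in a common compact subset of $U$, so the integrands $\mathcal{D}_u F(x^n_u)$ and $\nabla_x^2 F_u(x^n_u)\sigma^2(x^n(u))$ are uniformly bounded in $n$ and $u$; continuity at fixed times gives pointwise convergence of the integrands, and $\sigma$ is continuous, so the dominated convergence theorem yields convergence of the two Lebesgue integrals, while $F_t(x^n_t)\to F_t(x_t)$ directly by continuity at fixed times. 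This is exactly the argument already used inside the proof of Theorem~\ref{thm1}.

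The remaining — and main — issue is upgrading pointwise-in-$t$ convergence to uniform-in-$t$ convergence, i.e.\ convergence in $C([0,T])$. For the two integral terms this is routine: the integrands being uniformly bounded (by some constant $C$ on the common compact value-set), the maps $t\mapsto \int_0^t(\cdots)\,du$ are uniformly Lipschitz in $t$ with constant $\le C(1+\|\sigma\|^2_{\infty,\text{compact}})$, so an equicontinuity/Arzelà–Ascoli argument (or a direct $\varepsilon/3$ split into $[0,t_0]$ handled by pointwise convergence and $[t_0,T]$ handled by the Lipschitz bound) promotes pointwise to uniform convergence. The genuinely delicate part is the term $t\mapsto F_t(x_t)$: here I would invoke left-continuity, $F\in\mathbb{F}_l^\infty$, which controls $|F_t(x_t)-F_{t-h}(x_{t-h})|$ via the $d_\infty$ metric, together with the fact that $x\in C_{\sigma,s_0}$ is (uniformly) continuous, to get equicontinuity in $t$ of the family $\{t\mapsto F_t(x^n_t)\}_n$; combined with the already-established pointwise convergence this again gives uniform convergence. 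So the main obstacle I anticipate is verifying this equicontinuity cleanly — making sure that the modulus of continuity supplied by the $\mathbb{F}_l^\infty$ condition can be taken uniformly over the sequence $(x^n)$, which works because the $x^n$ have a common compact value-range and (being images $f_\sigma(\theta^n(\cdot))$ with $\theta^n\to\theta$ uniformly) a common modulus of continuity.
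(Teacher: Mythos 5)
Your proposal is correct and follows essentially the same route as the paper's proof: rewrite the F\"ollmer integral via the functional change of variables formula (Theorem~\ref{fcov}) and pass to the limit term by term using continuity at fixed times, boundedness preservation and dominated convergence. The only difference is that you additionally address the upgrade from pointwise-in-$t$ to uniform-in-$t$ convergence (needed for the target $C([0,T])$ with the supremum norm), a point the paper's proof leaves implicit by establishing only convergence for each fixed $t$.
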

\begin{proof}
Let $(x^n)_{n=0}^\infty\subset C_{\sigma,s_0}$ such that $d[x^n](u)=\sigma^2(x^n(u))du$ be a sequence converging to $x$ in supremum norm. By the functional change of variables formula, theorem~\ref{fcov}, we obtain that
\begin{align*}
&\int_0^t\nabla_xF_s(x_s)dx(s)-\int_0^t\nabla_{x}F_s(x^n_s)dx^n(s)\\=&F_t(x_t)-F_t(x^n_t)-\frac{1}{2}\int_0^t \nabla^2_xF_s(x_s)d[x](s)+\frac{1}{2}\int_0^t \nabla_{x}^2F_s(x^n_s)d[x^n](s)\\&-\int_0^t \mathcal{D}_sF(x_s)ds+\int_0^t \mathcal{D}_sF(x^n_s)ds.
\end{align*}
We know that $F$ is continuous at fixed times. Thus $F_t(x^n_t)\rightarrow F_t(x_t)$. Functional $\nabla_x^2F$ is locally bounded and continuous at fixed times, and $\sigma$ is a function of at most linear growth. Hence by the dominated convergence theorem
\begin{align*}
&\int_0^t \nabla^2_x F_s(x^n_s)d[x^n](s)=\int_0^t \nabla_x^2 F_s(x^n_s)\sigma^2(x^n(s))ds\\\rightarrow& \int_0^t \nabla_x^2 F_s(x_s)\sigma^2(x(s))ds=\int_0^t \nabla_x^2 F_s(x_s)d[x](s).
\end{align*}
Recall that $\mathcal{D}_sF$ is locally bounded and continuous at fixed times. Thus we obtain by the dominated convergence theorem that
\begin{equation*}
\int_0^t \mathcal{D}_sF(x^n_s)ds\rightarrow \int_0^t \mathcal{D}_sF(x_s)ds.
\end{equation*}
This completes the proof.
\end{proof}
The next theorem is one of the main results of the paper. The essential content is the following: If the hedging strategies of both BSV type and CF type exist pathwise, then the strategies must be the same in the sense of functionals of $C_{\sigma,s_0}$. In some sense this is a non-probabilistic version of the uniqueness part of the martingale representation theorem.
\begin{theor}
\label{samehedges}
Let $(\Omega, \mathcal{F},X,(\mathcal{F}_t^X)_{t\in[0,T]},\Pro)\in \mathcal{M}_\sigma$ be a discounted market model s.t. $X$ is a continuous and square integrable martingale. Let a claim $H \in \mathcal{F}^X_T$ s.t. $\E H^2<\infty$. Assume that $H$ can be hedged using an allowed strategy
\begin{equation*}
\Phi(t)=\phi(t,X(t),g_1(t,X_t),\dots,g_n(t,X_t)) \in \mathcal{L}^2(X).
\end{equation*}
Assume also that $Y(t)=\E [H| \mathcal{F}^X_t]\in D(X)$ s.t. $Y(t)=F_t(X_t)$ for $F\in \mathbb{C}^{1,2}_b$. Then $\forall x\in C_{\sigma,s_0}$ satisfying $d[x](s)=\sigma^2(x(s))ds$ and $\forall t\in[0,T]$
\begin{equation*}
\int_0^t\nabla_xF_s(x_s)dx(s)=\int_0^t\phi(s,x(s),g_1(s,x_s),\dots,g_n(s,x_s))dx(s).
\end{equation*}
\end{theor}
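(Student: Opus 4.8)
The plan is to combine the robust replication machinery of BSV (which tells us that the allowed strategy $\phi$ replicates $H$ in every model of $\mathcal M_\sigma$, in particular in the reference model $\bar S$) with Theorem~\ref{thm1} (which tells us that the CF functional $\nabla_x F$ produces a F\"ollmer representation of $F_T$ along every path of every model in $\mathcal M_\sigma$), and then to upgrade the resulting almost-sure path identity to an \emph{everywhere} identity on $\{x\in C_{\sigma,s_0}: d[x](u)=\sigma^2(x(u))du\}$ by a density/continuity argument using Lemma~\ref{ctsfnl}.

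First I would work in the reference model $(\bar\Omega,\bar{\mathcal F},\bar S,(\mathcal F^W_t),\mathbb Q)\in\mathcal M_\sigma$, where $\bar S$ is a continuous square integrable martingale. Since $Y\in D(X)\subset\mathcal C^{1,2}_b(X)$ and $H$ is replicated by the allowed strategy $\Phi$, two martingale representations of $Y(t)=\E[H\mid\mathcal F_t]$ as It\^o integrals are available: by Theorem~\ref{mgrep}, $Y(T)=\E Y(T)+\int_0^T\nabla_xF_s(\bar S_s)d\bar S(s)$; and by the BSV robust replication theorem (its converse direction guarantees that a functional hedge in one model of $\mathcal M_\sigma$ is one in the reference model), $H=v_0+\int_0^T\phi(s,\bar S(s),g_1(s,\bar S_s),\dots)d\bar S(s)$ with $v_0=\E Y(T)$. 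By uniqueness of the It\^o integral representation (both integrands are in $\mathcal L^2(\bar S)$), the two integrands agree $d[\bar S]\times d\mathbb Q$-a.e., and in fact the two integral processes agree $\mathbb Q$-a.s.\ for all $t$ (stopping at rationals and using continuity). Equivalently, for $\mathbb Q$-a.e.\ path $x=\bar S(\omega)$ and all $t$,
\begin{equation*}
\int_0^t\nabla_xF_s(x_s)dx(s)=\int_0^t\phi(s,x(s),g_1(s,x_s),\dots,g_n(s,x_s))dx(s),
\end{equation*}
where on the left the It\^o integral coincides with the F\"ollmer integral for $F\in\mathbb C^{1,2}_b$ (Theorem~\ref{fcov}), and on the right the It\^o integral coincides with the pathwise forward integral for the smooth/piecewise-smooth integrand $\Phi$ along a.e.\ path, as in~\citet{b-s-v}.

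Next I would pass from ``$\mathbb Q$-a.e.\ path'' to ``every path in the quadratic-variation subset.'' Let $A=\{x\in C_{\sigma,s_0}: d[x](u)=\sigma^2(x(u))du\}$. Since $\bar S$ has full support in $C_{s_0}$ and $\mathbb Q(\bar S\in C_{\sigma,s_0})=1$, the set of paths for which the identity holds is dense in $C_{\sigma,s_0}$, hence dense in $A$ for the supremum norm. Both sides of the claimed identity are, by Lemma~\ref{ctsfnl} (for the CF side) and by the analogous BSV continuity statement (\citet[Lemma~4.5]{b-s-v}, the continuity of $x\mapsto\int_0^\cdot\phi(s,x(s),g_1(s,x_s),\dots)dx(s)$ on $A$, which is exactly what the hindsight-factor and $C^1$ assumptions buy), continuous maps $A\to C([0,T])$. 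Two continuous maps that agree on a dense subset agree everywhere; fixing $t$ then gives the stated pointwise identity for all $x\in A$ and all $t\in[0,T]$.

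The main obstacle I expect is the bookkeeping around \emph{which} integral notion appears where and making the three coincidences precise simultaneously: (i) It\^o $=$ F\"ollmer for $\nabla_xF$ with $F\in\mathbb C^{1,2}_b$ along a.e.\ path of $\bar S$; (ii) It\^o $=$ pathwise forward integral for the allowed strategy $\Phi$; and (iii) the a.s.\ uniqueness of the representation has to be promoted to an identity of \emph{continuous processes} (all $t$ simultaneously) before one can speak of a pathwise statement. Points (i)--(ii) are essentially quoted from Theorem~\ref{fcov} and from~\citet{b-s-v}, and (iii) is a standard continuity-in-$t$ argument; the genuinely new ingredient is invoking the BSV robust replication theorem to move the hedge into the reference model, and Lemma~\ref{ctsfnl} together with its BSV counterpart to run the density argument on $A$.
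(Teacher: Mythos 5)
Your proposal is essentially the paper's argument: both rest on (i) the two It\^o-integral representations of $H-\E H$ in the martingale model, forcing $\phi=\nabla_xF$ as $\mathcal L^2(X)$-integrands via the It\^o isometry, and (ii) the full-support/CFS property combined with Lemma~\ref{ctsfnl} and \citet[Lemma~4.5]{b-s-v} to transfer the a.s.\ identity of the integral processes to every path with the prescribed quadratic variation (the paper phrases step (ii) in contrapositive form, as a contradiction, but the content is identical). One small correction: the detour through the reference model $\bar S$ is unnecessary and, taken literally, unjustified --- the hypotheses $Y(t)=\E[H\mid\mathcal F^X_t]=F_t(X_t)\in D(X)$ and the replication of $H$ by $\Phi$ are stated for $X$, which is already assumed to be a continuous square integrable martingale in $\mathcal M_\sigma$, so Theorem~\ref{mgrep} and the isometry argument should be applied directly to $X$ rather than to $\bar S$ (for which $Y(t)=F_t(\bar S_t)$ is not given).
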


\begin{proof}
Let us write $\psi_t(x_t)=\phi(t,x(t),g_1(t,x_t),\dots,g_n(t,x_t))$.

The proof proceeds by contradiction. Assume that for some $x\in C_{\sigma,s_0}$ satisfying $d[x](s)=\sigma^2(x(s))ds$ and for some $t\in[0,T]$
\begin{equation*}
\left | \int_0^t (\psi_s(x_s)dx(s)-\int_0^t\nabla_xF_s(x_s))dx(s)\right|=\epsilon>0. 
\end{equation*}
W.l.o.g. we can assume that $t<T$. Now we use CFS property, lemma~\ref{ctsfnl} and \citet[lemma~4.5]{b-s-v} to obtain that
\begin{equation}
\label{contra}
\Pro\left(\int_0^t\psi_s(X_s)dX(s)\neq \int_0^t \nabla_xF_s(X_s)dX(s)\right)>0.
\end{equation}
On the other hand by the assumptions
\begin{equation*}
\int_0^T \psi_s(X_s)dX(s)=H-\E H=\int_0^T \nabla_xF_s(X_s)dX(s).
\end{equation*}
Now by the It\^o isometry
\begin{equation*}
\E \int_0^T \left( \psi_s(X_s)-\nabla_x F_s(X_s)\right)^2\sigma^2(X(s))ds=0.
\end{equation*}
This implies that
\begin{equation*}
\psi_s(X_s)=\nabla_xF_s(X_s) \quad \Pro\times \text{Leb}- {a.s.}
\end{equation*}
Thus for all $t\in [0,T]$
\begin{equation*}
\Pro\left(\int_0^t \psi_s(X_s)dX(s)=\int_0^t\nabla_x F_s(X_s)dX(s)\right)=1,
\end{equation*}
which is a contradiction with equation~(\ref{contra}). This completes the proof of the theorem.
\end{proof}
Using the result of theorem~\ref{samehedges}, one can extend the pathwise vertical derivative of a process by defining it as the hedging strategy in the BSV sense.
\begin{maar}[Extension of the vertical derivative]
Let $(\Omega,\mathcal{F},S,(\mathcal{F}_t)_{t\in[0,T]},\Pro)\in \mathcal{M}_\sigma$. Let $Y$ be adapted to $(\mathcal{F}_t)_{t\in[0,T]}$ s.t. for all $t\in[0,T]$ $Y(t)$ can be hedged using an allowed strategy
\begin{equation*}
\psi_s(S_s)=\phi(s,S(s),g_1(s,S_s),\dots,g_n(s,S_s))
\end{equation*}
i.e. there exists $C$ such that for all $t\in [0,T]$ it holds that
\begin{equation*}
Y(t)=C+\int_0^t \psi_s(S_s)dS(s).
\end{equation*}
Assume that for the reference model $(\tilde{\Omega},\tilde{\mathcal{F}},\tilde{S},(\tilde{\mathcal{F}}_t)_{t\in[0,T]},\tilde{\Pro})\in \mathcal{M}_\sigma$ it holds that $\tilde{S}\in L^2(\tilde{\Pro})$ and $\psi_t(\tilde{S}_t)\in \mathcal{L}^2(\tilde{S})$.
Then the pathwise vertical derivative of process $Y$ w.r.t. process $S$ is defined as
\begin{equation*}
(\nabla_S Y)(s)=\psi_s(S_s), \quad s\in[0,T].
\end{equation*}
\end{maar}
The advantage of this definition is that the vertical derivative w.r.t. a process can be understood in a pathwise sense. This may be an advantage when trying to develop numerical methods for pathwise hedging of options using the functional change of variables formula approach.
\section{Examples}
\label{examples}
\subsection{Asian options}
\label{examples1}
The following is a continuation of example~\ref{integralexample}.
\begin{example}[Continuous average]
The continuous average can be represented using the following non-anticipative functional
\begin{equation*}
F_t(x_t)=\frac{1}{T}\int_0^t x(s)ds+\frac{T-t}{T}x(t).
\end{equation*}
Note that if $X$ is a martingale, then also $(F_t(X_t))_{t\in [0,T]}$ is a martingale. Thus
\begin{equation*}
\nabla_x F_t(x_t)=\frac{T-t}{T}
\end{equation*}
is a pathwise hedging strategy. In this case, $\nabla^2_xF=0$ and $\mathcal{D}F=0$ and thus the following integral representation holds for $S$ even without the CFS property
\begin{equation*}
\frac{1}{T}\int_0^T S(s)ds=S(0)+\int_0^T \frac{T-s}{T}dS(s).
\end{equation*}
\end{example}
\begin{example}[Discrete average]
The discrete average can be hedged using the cylindrical functional
\begin{equation*}
F_t(x_t)=\sum_{i=1}^N 1_{(t_{i-1},t_i]}f_i(x(t_0),\dots,x(t_{i-1}),x(t),t),
\end{equation*}
where
\begin{equation*}
f_i(x(t_0),\dots,x(t_i),x(t),t)=\frac{1}{N+1}\left(\sum_{j=0}^{i-1}x(t_j)+\frac{t-t_{i-1}}{t_i-t_{i-1}}x(t)\right),
\end{equation*}
where $t_j=\frac{j}{N}$, $j=0,\dots,N$. Now the derivatives of $F$ are
\begin{equation*}
\nabla_xF_t(x_t)=\frac{N}{N+1}\sum_{i=1}^N(t-t_{i-1})1_{(t_{i-1},t_i]}(t), \quad \nabla^2_xF_t(x_t)=0, \quad \mathcal{D}_tF(x_t)=\frac{N}{N+1}x(t).
\end{equation*}
Hence, the following change of variable holds
\begin{equation*}
\frac{1}{N+1}\sum_{j=0}^Nx(t_j)=x(0)+\frac{N}{N+1}\int_0^T\sum_{j=1}^N(s-t_{j-1})1_{(t_{j-1},t_j]}(s)dx(s)+\frac{N}{N+1}\int_0^Tx(s)ds.
\end{equation*}
This gives us hedging strategy for the difference of discrete and continuous averages. Now combined with the hedging strategy for the continuous average, we get a pathwise hedging strategy for the discretely sampled average.
\end{example}
\begin{example}[Geometric Asian call]
The hedging strategy for geometric Asian call option is known explicitly in the Black-Scholes model. The hedging strategy depends smoothly on the spot and on
\begin{equation*}
\int_0^t \log S(s)ds,
\end{equation*}
which is a hindsight factor with respect to the driving Brownian motion. Hence, the BSV approach applies.
\end{example}
\subsection{Mixed models}
\label{examples2}
Let the price of an asset be modeled as
\begin{equation*}
S(t)=\exp{\left (\epsilon W(t)+\sigma Z(t)+\mu t\right)},
\end{equation*}
where $W$ is a Brownian motion, $Z$ is a zero quadratic variation process, $\epsilon, \sigma>0$ and $\mu \in \R$. Such a model is called a mixed model. Note that in general $S$ is not a semimartingale. However, the pathwise hedges of options depending on $S$ are the same as in the ordinary Black-Scholes model.

Several different stylized facts can be included in mixed models. For example long range dependence can be considered in fractional Brownian motion (fBm) or fractional L\'evy process (fLp) models~\citep{tikanmaki}. Short range dependence can be included in fractional Ornstein-Uhlenbeck process (fOU)~\citep{kaarakkasalminen} model. It is also possible to consider models with relatively heavy tails (fractional L\'evy processes). One can obtain arbitrary heavy tails with the following integrated compound Poisson process (icP).
\begin{example}[Integrated compound Poisson process]
\label{harri}
Let $(\tau_k)_{k=1}^\infty$ be the jump times of a Poisson process and $(U_k)_{k=1}^\infty$ be an i.i.d. sequence independent of $(\tau_k)_{k=1}^\infty$. Assume that
\begin{equation*}
\Pro(U_k\geq x)  \sim x^{-\alpha}
\end{equation*}
for some $\alpha>0$. Now
\begin{equation*}
Y(t)=\sum_{k=1}^\infty U_k1_{\{\tau_k\leq t\}}
\end{equation*}
is a compound Poisson process with heavy tailed jumps. The integrated compound Poisson process is defined as
\begin{equation*}
Z(t)=\int_0^t Y(s)ds.
\end{equation*}
It is easy to see that $Z$ has zero quadratic variation, tails of order $x^{-\alpha}$ and long range dependence property.
\end{example}
If one needs heavy tails without long range dependence, one can consider for example the following scaled integrated compound Poisson process (sicP).
\begin{equation*}
Z(t)=e^{-t}\int_0^t Y(s)ds,
\end{equation*}
where $Y$ is an icP with $\alpha>2$. Note that when $\alpha < 2$ there is not any covariance structure and thus it is obviously impossible to have short range dependence.

The properties of different $Z$ are collected to the table~\ref{comparisontable}. Note that any combination of long or short range dependence and heavy tails or all finite moments is possible.

\begin{table}[h!]
\begin{tabularx}{\linewidth}{|>{\hsize=.45\linewidth}X|>{\hsize=.11\linewidth}X|>{\hsize=.11\linewidth}X|>{\hsize=.11\linewidth}X|>{\hsize=.11\linewidth}X|>{\hsize=.11\linewidth}X|}

\hline
Property / Process $Z$ & fBm & fLp & fOU & icP & sicP\\
\hline
Long/short range dependence & Long & Long & Short & Long & Short\\
\hline
Heavy tails & No & Y/N & No & Yes & Yes\\
\hline
\end{tabularx}
\caption{Stylized facts that can be included in a mixed model.}
\label{comparisontable}
\end{table}
\section*{Acknowledgements}
I have been supported financially by Academy of Finland, grant 21245. I am grateful to Esko Valkeila, Ehsan Azmoodeh and Lauri Viitasaari for their comments and Harri Nyrhinen for example~\ref{harri}.

\bibliographystyle{plainnat}
\bibliography{hedging.bib}

\end{document}